\newcommand{\co}{\colon\thinspace}
\newcommand{\reg}{\text{reg}}
\newcommand{\id}{\text{id}}
\renewcommand{\reg}{\text{reg}}
\DeclareMathOperator{\dom}{dom}
\DeclareMathOperator{\im}{im}
\DeclareMathOperator{\At}{At}
\DeclareMathOperator{\gr}{gr}
\newtheorem{theorem}{Theorem}
\newtheorem{lemma}{Lemma}
\newtheorem{proposition}{Proposition}
\newtheorem{corollary}{Corollary}
\theoremstyle{definition}
\newtheorem{definition}{Definition}
\newtheorem{remark}{Remark}
\newtheorem{example}{Example}
\newtheorem{examples}{Examples}
\title{Suborbifolds and groupoid embeddings}
\author{Jo\~ao Nuno Mestre}
\address{Max-Planck-Institut f\"ur Mathematik, Vivatsgasse 7, 53111 Bonn, Germany}
\email{jnmestre@gmail.com}
\author{Martin Weilandt}
\address{Universidade Federal de Santa Catarina, Departamento de Matem\'atica, Campus Universit\'ario Trindade, Florian\'opolis-SC, 88040-900, Brazil}
\curraddr{mi Solutions \& Consulting GmbH, Frankfurter Stra\ss e 1, 65239 Hochheim am Main, Germany}
\email{martin.weilandt@alumni.hu-berlin.de}
\begin{document}

\begin{abstract}
Given the notion of suborbifold of the second author (based on ideas of Borzellino/Brunsden) and the classical correspondence (up to certain equivalences) between (effective) orbifolds via atlases and effective orbifold groupoids, we analyze which groupoid embeddings correspond to suborbifolds and give classes of suborbifolds naturally leading to groupoid embeddings.
\end{abstract}

\maketitle

\section{Introduction}
The study of the geometry of subspaces demands an appropriate notion of embeddings. Given the variety of definitions of smooth maps between (effective) orbifolds given by atlases (compare Satake's different notions of ``$C^\infty$-map'' in \cite{Satake1956} and \cite{Satake1957}, also see \cite{Chen2002, Borzellino2008}) and the variety of reasonable definitions of suborbifolds in this class (\cite{Weilandt2017}), it is common to focus on the setting of orbifold groupoids with its natural notions of homomorphism (\cite{Pohl2017, Chen2006}) and embedding (\cite{Cho2013}). The link between these two settings is given by the classical correspondence between diffeomorphism classes of (effective) orbifolds via atlases and Morita equivalence classes of orbifold groupoids of \cite{Moerdijk1997, Moerdijk2003}, which we recall briefly at the beginnings of Sections \ref{sec:groupoid-orbifold} and \ref{sec:orbifold-groupoid}.

This paper is structured as follows: In Section \ref{sec:prelim} we recall basic definitions of groupoids and (effective) orbifolds and consider certain classes of subgroupoids and suborbifolds. In Section \ref{sec:groupoid-orbifold} we illustrate how a subgroupoid of an effective orbifold groupoid naturally leads to a suborbifold. In Section \ref{sec:orbifold-groupoid} we give sufficient criteria for a suborbifold to lead to a subgroupoid and verify that certain graphs including the diagonal in any orbifold fall into this category.

It seems reasonable to believe that a closer look at \cite{Pohl2017} could lead to shorter proofs of our results, but we prefer to give a self-contained account based only on the classical correspondence mentioned above instead of working with the rather complex constructions from \cite{Pohl2017}.

All manifolds and all group actions in this work are assumed to be smooth ($C^\infty$) and all submanifolds are embedded. Manifolds are usually second countable and Hausdorff, the only exception being the manifold $G_1$ of arrows in a Lie groupoid $G_1\rightrightarrows G_0$, see Definition \ref{def:groupoid}.


\section{Preliminaries}
\label{sec:prelim}
\subsection{Groupoids and group actions}

Before getting to the theory of orbifolds, we recall some basic theory of Lie groupoids and subgroupoids, (we refer to \cite{Moerdijk2003} for further detail) and illustrate how group actions lead to natural examples of certain subgroupoids.

\begin{definition}
\label{def:groupoid}
A \emph{Lie groupoid} is given by a tuple $(G_0,G_1,s,t,m,u,i)$, where 
\begin{itemize}
\item $G_0$ and $G_1$ are smooth manifolds, and $G_0$ is second countable and Hausdorff (but $G_1$ not necessarily);
\item $s,t\co G_1\to G_0$ are submersions, called the \emph{source} and \emph{target} maps;
\item the \emph{multiplication} 
$m\co G_1 \tensor[_s]{\times}{_t} G_1\to G_1$, the \emph{unit} $u\co G_0\to G_1$ and the \emph{inverse} $i\co G_1\to G_1$ are smooth maps.
\end{itemize}

We will also use the notations $1_p=u(p)$, $g^{-1}=i(g)$ and $gh=m(g,h)$. The \emph{structure maps} $s,t,m,u,i$ are required to satisfy the following axioms:

\begin{itemize}
\item $s(gh)=s(h)$ and $t(gh)=t(g)$
\item $(gh)k=g(hk)$
\item $s(1_p)=t(1_p)=p$ and $g1_{s(g)}=1_{t(g)}g=g$
\item $s\circ i=t$, $t\circ i = s$, $gg^{-1}=1_{t(g)}$ and $g^{-1}g=1_{s(g)}$
\end{itemize}

We say that $G$ is a Lie groupoid \emph{over} $G_0$, and we often denote the groupoid $G$ by $G_1\rightrightarrows G_0$. The manifolds $G_0$ and $G_1$ are called the space of \emph{objects} and space of \emph{arrows} of $G$, respectively. Given $g\in G_1$ with $s(g)=p$ and $t(g)=q$, we will sometimes write $g\co p\to q$.
\end{definition}

\begin{examples}
\label{ex:groupoids}
\begin{enumerate}

\item \label{ex:groupoids:1} Any Lie group $G$ can be seen as a Lie groupoid over a point $G \rightrightarrows \{*\}$, with $m$ and $i$ given by multiplication and inverse in $G$, respectively.
\item Any smooth (Hausdorff, second countable) manifold $M$ can be considered as a Lie groupoid where the only arrows are unit arrows, $M\rightrightarrows M$.
\item Given a smooth action of a Lie group $G$ on a manifold $M$, we can construct the action groupoid $G\ltimes M = (G\times M\rightrightarrows M)$, with structure maps determined by $s(g,p)=p$, $t(g,p)=gp$, $(g,hq)(h,q)=(gh,q)$.

\end{enumerate}
\end{examples}

\begin{definition} Let $G$ be a Lie groupoid.
Two points $p,q\in G_0$ are \emph{equivalent} if and only if there is an arrow $g\in G_1$ such that $s(g)=p$ and $t(g)=q$. The equivalence class of $p\in G_0$ with respect to this equivalence relation is called the \emph{orbit} of $p$, and denoted by $Gp$. The quotient topological space of $G_0$ with respect to this equivalence relation is called the \emph{orbit space} of $G$, and is denoted by $|G|$.

Given $p,q\in G_0$, we write $G(p,q)$ for the set of all arrows $p\to q$. Given $p\in G_0$, we write $G_p$ for the group $G(p,p)$ (with composition given by $m$).
\end{definition}

\begin{remark} Given a Lie groupoid $G$, the projection $\pi\co G_0\to |G|$ is an open map. Indeed, given an open subset $U$ of $G_0$, its image $\pi(U)$ is open in $|G|$ if and only if $\pi^{-1}(\pi(U))$ is open in $G_0$. That is the case, since $\pi^{-1}(\pi(U))= t(s^{-1}(U))$, and $s$ and $t$ are submersions.
\end{remark}

\begin{definition}
Let $G,H$ be Lie groupoids. 
\begin{enumerate}
\item A \emph{Lie groupoid morphism} $\Phi\co H\to G$ is a pair $(\Phi_0,\Phi_1)$ of smooth maps $\Phi_i\co H_i\to G_i$, $i=0,1$, which respect all structure maps, i.e., $s^G\circ\Phi_1=\Phi_0\circ s^H$, $t^G\circ\Phi_1=\Phi_0\circ t^H$, $\Phi_1\circ m^H=m^G\circ(\Phi_1\times\Phi_1)_{|H_1\tensor[_s]{\times}{_t} H_1}$, $u^G\circ\Phi_0=\Phi_1\circ u^H$, $i^G\circ\Phi_1=\Phi_1\circ i^H$.
\item A \emph{Lie groupoid embedding} is a Lie groupoid morphism $\Phi\co H\to G$ such that each $\Phi_i\co H_i\to G_i$, $i=0,1$, is a smooth embedding between manifolds.
\end{enumerate}
  
\end{definition}

\begin{definition}
Given a Lie groupoid morphism $\Phi\co H\to G$, the induced continuous map $|H|\ni Hp\mapsto G\Phi(p)\in |G|$ between the orbit spaces will be denoted by $|\Phi|$.
\end{definition}

\begin{lemma}
\label{lemma:essinj}
  Let $\Phi\co H\to G$ be a Lie groupoid embedding. Then the following conditions are equivalent.
  \begin{enumerate}
  \item \label{essinj1} For every $p,q\in H_0$ the existence of an arrow $\Phi_0(p)\to\Phi_0(q)$ in $G_1$ implies the existence of an arrow $p\to q$ in $H_1$.
  \item \label{essinj2} $|\Phi|\co |H|\to |G|$ is injective.
  \item \label{essinj3} For every $p\in H_0$ we have $\Phi_0(Hp)=G\Phi_0(p)\cap \Phi_0(H_0)$.
  \end{enumerate}
\end{lemma}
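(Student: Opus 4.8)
The plan is to establish the cyclic chain of implications $(\ref{essinj1}) \Rightarrow (\ref{essinj2}) \Rightarrow (\ref{essinj3}) \Rightarrow (\ref{essinj1})$, since each of the three steps reduces to unwinding the relevant definition. The facts I would use repeatedly are: two objects of a Lie groupoid lie in the same orbit exactly when there is an arrow between them; the induced map satisfies $|\Phi|(Hp) = G\Phi_0(p)$ for every $p \in H_0$; and --- the only place the embedding hypothesis enters beyond it being a morphism --- the object map $\Phi_0$ is injective.

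For $(\ref{essinj1}) \Rightarrow (\ref{essinj2})$, suppose $|\Phi|(Hp) = |\Phi|(Hq)$, i.e.\ $G\Phi_0(p) = G\Phi_0(q)$; then there is an arrow $\Phi_0(p) \to \Phi_0(q)$ in $G_1$, so by (\ref{essinj1}) there is an arrow $p \to q$ in $H_1$ and hence $Hp = Hq$. For $(\ref{essinj2}) \Rightarrow (\ref{essinj3})$, the inclusion $\Phi_0(Hp) \subseteq G\Phi_0(p) \cap \Phi_0(H_0)$ holds for an arbitrary morphism, since an arrow $p \to q$ in $H_1$ is sent by $\Phi_1$ to an arrow $\Phi_0(p) \to \Phi_0(q)$ in $G_1$, while the reverse inclusion is where injectivity of $|\Phi|$ is used: if $x = \Phi_0(q) \in G\Phi_0(p) \cap \Phi_0(H_0)$, then $|\Phi|(Hq) = G\Phi_0(q) = G\Phi_0(p) = |\Phi|(Hp)$, so $Hq = Hp$ and thus $x \in \Phi_0(Hp)$.

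Finally, for $(\ref{essinj3}) \Rightarrow (\ref{essinj1})$, given an arrow $\Phi_0(p) \to \Phi_0(q)$ in $G_1$ we have $\Phi_0(q) \in G\Phi_0(p) \cap \Phi_0(H_0)$, which equals $\Phi_0(Hp)$ by (\ref{essinj3}); hence $\Phi_0(q) = \Phi_0(q')$ for some $q' \in Hp$, and injectivity of $\Phi_0$ gives $q = q' \in Hp$, i.e.\ an arrow $p \to q$ exists in $H_1$.

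I do not expect a genuine obstacle: the statement concerns only the underlying categorical structure and becomes a string of one-line checks once the descriptions of orbits and of $|\Phi|$ are recorded. The one subtlety worth flagging is that injectivity of $\Phi_0$ is essential in the step $(\ref{essinj3}) \Rightarrow (\ref{essinj1})$ --- this is exactly the point where we need $\Phi$ to be an embedding rather than an arbitrary Lie groupoid morphism.
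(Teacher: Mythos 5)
Your proposal is correct and follows essentially the same cyclic argument $(\ref{essinj1})\Rightarrow(\ref{essinj2})\Rightarrow(\ref{essinj3})\Rightarrow(\ref{essinj1})$ as the paper, with each step unwinding the definitions in the same way. Your closing observation that only injectivity of $\Phi_0$ (not the smooth structure) is needed matches the remark the authors make immediately after the lemma.
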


\begin{proof}
  To see that \eqref{essinj1} implies \eqref{essinj2}, let $p,q\in H_0$ such that $G\Phi_0(p)=G\Phi_0(q)$. Since $\Phi_0(p)$ and $\Phi_0(q)$ are in the same $G$-orbit, then by \eqref{essinj1} $p$ and $q$ are in the same $H$-orbit and hence $Hp=Hq$.
  
  To see that \eqref{essinj2} implies \eqref{essinj3}, let $p\in H_0$. Since $\Phi_0$ is a Lie groupoid morphism, the inclusion $\Phi_0(Hp)\subset G\Phi_0(p)\cap \Phi_0(H_0)$ automatically holds. For the other inclusion let $q\in H_0$ such that $\Phi_0(q)\in G\Phi_0(p)$. Then $\Phi_0(p)$ and $\Phi_0(q)$ lie in the same $G$-orbit and, by \eqref{essinj2}, $p$ and $q$ lie in the same $H$-orbit. Then $q\in Hp$ and hence $\Phi_0(q)\in\Phi_0(Hp)$.
  
  To see that \eqref{essinj3} implies \eqref{essinj1}, note that the existence of an arrow $\Phi_0(p)\to\Phi_0(q)$ means $\Phi_0(q)\in G\Phi_0(p)\cap\Phi_0(H_0)$. Since $\Phi_0$ is injective, \eqref{essinj3} implies $q\in Hp$.
\end{proof}

We should note that the hypothesis of the lemma above can be weakened to asking that $\Phi$ be an injective groupoid morphism, since the smooth structure is not relevant for the proof. Furthermore, the equivalence of \eqref{essinj1} and \eqref{essinj2} above has already been observed in \cite{Cho2013} and does not require the injectiveness assumption on $\Phi$. The following terminology is borrowed from the same source.

\begin{definition}
  A Lie groupoid embedding is called \emph{essentially injective} if it satisfies one of the equivalent conditions from Lemma \ref{lemma:essinj}.
\end{definition}

We also recall the following standard definition.

\begin{definition}
  Let $\Phi\co H\to G$ be a Lie groupoid embedding. $\Phi$ is \emph{fully faithful} if ${\Phi_1}_{|H(p,q)}\co H(p,q)\to G(\Phi_0(p),\Phi_0(q))$ is a bijection, for every $p,q\in H_0$. 
\end{definition}

From condition \eqref{essinj1} in Lemma \ref{lemma:essinj} it is clear that every fully faithful Lie groupoid embedding is essentially injective. The converse does not hold in general. A very simple counterexample is given by considering any closed proper Lie subgroup of a Lie group, e.g., the identity subgroup of any non-trivial Lie group, compare Example \ref{ex:groupoids} \eqref{ex:groupoids:1}. (Counterexamples more relevant in the context of orbifolds can be obtained applying Proposition \ref{prop:subgroupoids} below to \cite[Examples 10, 11, 14]{Borzellino2015}.)

\begin{definition}
  Let $G=(G_1\rightrightarrows G_0)$ be a Lie groupoid. A \emph{Lie subgroupoid} of $G$ is a Lie groupoid $H=(H_1\rightrightarrows H_0)$ such that $H_i\subset G_i$ for $i=0,1$ and the inclusion $\iota\co H\hookrightarrow G$ is a Lie groupoid embedding. $H$ is called \emph{essentially injective} if $\iota$ is essentially injective and \emph{full} if $\iota$ is fully faithful.
\end{definition}

Note that the notion of Lie subgroupoid we consider is commonly called by embedded subgroupoid in the literature.

Group actions provide an easy way to construct essentially injective and/or full subgroupoids. First recall the following terminology from \cite{Weilandt2017}.
\begin{definition}
  Let $K$ be a Lie group acting on a manifold $M$, let $L\subset K$ be a closed subgroup and let $N\subset M$ be an $L$-invariant submanifold.
  \begin{enumerate}
  \item $N$ is an \emph{$L$-submanifold} if for every $k\in K,p\in N$ such that $kp\in N$ there is $l\in L$ such that $lp=kp$. If, moreover, the action of $L$ on $N$ is effective, $N$ is called an \emph{effective $L$-submanifold}.
  \item $N$ is a \emph{full $L$-submanifold} if for every $k\in K, p\in N$ such that $kp\in N$ we have $k\in L$. 
  \end{enumerate}
\end{definition}

\begin{proposition}
\label{prop:subgroupoids}
  Let $K$ be a Lie group acting on a manifold $M$, let $L\subset K$ be a closed subgroup and let $N\subset M$ be an $L$-invariant submanifold. Then:
  \begin{enumerate}
    \item \label{prop:subgroupoids:1} $L\ltimes N$ is a Lie subgroupoid of $K\ltimes M$.
    \item \label{prop:subgroupoids:2} $L\ltimes N$ is essentially injective if and only if $N$ is an $L$-submanifold of $M$.
    \item \label{prop:subgroupoids:3} $L\ltimes N$ is full if and only if $N$ is a full $L$-submanifold of $M$.
  \end{enumerate}
\end{proposition}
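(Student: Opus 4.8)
The plan is to prove each part by unwinding the relevant definitions, so the real content lies in matching the groupoid notions against the definitions of ($L$-)submanifold. For \eqref{prop:subgroupoids:1} I would first recall that, by Cartan's closed subgroup theorem, $L$ is an embedded submanifold of $K$, hence $L\times N\subset K\times M$ is an embedded submanifold and, together with the standing assumption that $N\subset M$ is embedded, the inclusions $\iota_0\co N\hookrightarrow M$ and $\iota_1\co L\times N\hookrightarrow K\times M$ are smooth embeddings. Then I would observe that, since $L$ is a subgroup and $N$ is $L$-invariant, the structure maps of the action groupoid $K\ltimes M$ restrict to $L\ltimes N$: the target $t(l,p)=lp$ lands in $N$, multiplication $(l,l'p)(l',p)=(ll',p)$ stays in $L\times N$, and the unit and inverse restrict as well; moreover $s$ restricts to a projection and $t$ to a submersion (for fixed $l\in L$ the map $p\mapsto lp$ is a diffeomorphism of $N$), so $L\ltimes N$ is a Lie groupoid and $\iota=(\iota_0,\iota_1)$ is a Lie groupoid morphism, hence a Lie groupoid embedding.

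For \eqref{prop:subgroupoids:2} I would apply the characterization \eqref{essinj1} of essential injectivity from Lemma \ref{lemma:essinj}. An arrow $p\to q$ in $K\ltimes M$ is a pair $(k,p)$ with $kp=q$, and likewise in $L\ltimes N$ with $k$ replaced by some $l\in L$ and $p,q\in N$; thus \eqref{essinj1} for $\iota$ says that for all $p,q\in N$ the existence of $k\in K$ with $kp=q$ forces the existence of $l\in L$ with $lp=q$. Setting $q=kp$, this is precisely the condition that $N$ be an $L$-submanifold of $M$, giving the equivalence.

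For \eqref{prop:subgroupoids:3}, note that $\iota_1$ is an inclusion, so its restriction $(L\ltimes N)(p,q)\to (K\ltimes M)(\iota_0(p),\iota_0(q))$ is automatically injective for all $p,q\in N$, and fullness amounts to surjectivity of all these restrictions. As in the previous part, that surjectivity reads: for all $p,q\in N$ and all $k\in K$ with $kp=q$ one has $k\in L$; setting $q=kp$ this is exactly the definition of a full $L$-submanifold.

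I would not expect a genuine obstacle here: beyond Cartan's closed subgroup theorem and the routine verification in \eqref{prop:subgroupoids:1} that the action-groupoid structure maps restrict well, the proof is just careful bookkeeping translating condition \eqref{essinj1} and fully faithfulness into the defining conditions for $L$-submanifolds and full $L$-submanifolds.
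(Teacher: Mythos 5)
Your proof is correct. Parts \eqref{prop:subgroupoids:1} and \eqref{prop:subgroupoids:2} match the paper: the paper dismisses \eqref{prop:subgroupoids:1} as straightforward (your Cartan-theorem/restriction-of-structure-maps argument is exactly the verification being omitted), and \eqref{prop:subgroupoids:2} is in both cases a direct translation of condition \eqref{essinj1} of Lemma \ref{lemma:essinj}. The one genuine difference is the forward direction of \eqref{prop:subgroupoids:3}: the paper first deduces from fullness that $N$ is an $L$-submanifold (via \eqref{prop:subgroupoids:2}) and that the isotropy groups satisfy $L_p=K_p$ for all $p\in N$, and then invokes an external lemma from \cite{Weilandt2017} to combine these two facts into the full $L$-submanifold condition. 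You instead note that $\iota_1$ is an inclusion, so fullness reduces to surjectivity of the hom-set maps $(L\ltimes N)(p,q)\to (K\ltimes M)(p,q)$, which is verbatim the statement ``$kp=q\in N$ implies $k\in L$,'' i.e.\ the definition of a full $L$-submanifold. Your route is more direct and self-contained, avoiding the citation; the paper's route has the minor virtue of isolating the isotropy condition $L_p=K_p$, which is the conceptual content separating ``full'' from ``essentially injective.'' Both arguments are sound.
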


\begin{proof}
  The proof of \eqref{prop:subgroupoids:1} is straightforward. \eqref{prop:subgroupoids:2} follows directly from Lemma \ref{lemma:essinj} \eqref{essinj1} and the definition of an $L$-submanifold above.
  
  To see \eqref{prop:subgroupoids:3} first note that fullness of $L\ltimes N$ implies that $N$ is an $L$-submanifold (by \eqref{prop:subgroupoids:2}) and $L_p=(L\ltimes N)_p=(K\ltimes M)_p=K_p$ for every $p\in N$. These two facts are easily seen to imply that $N$ is a full $L$-submanifold (see \cite[Lemma 2.4]{Weilandt2017}). Vice versa, if $N$ is a full $L$-submanifold, then given an arrow $g=(k,p)\in (K\ltimes M)_1$ from $p\in N$ to $q\in N$, we note that $kp=q$ implies $k\in L$ and hence $g\in(L\ltimes N)_1$.
\end{proof}

\subsection{Orbifolds and suborbifolds}
In this section we summarize the definitions of orbifold groupoids and (effective) orbifolds (via charts) and give characterizations of suborbifolds which will come in handy in the following sections.
First recall the following classical definition.
\begin{definition}

A  Lie groupoid $G$ is called \emph{proper} if $G_1$ is Hausdorff and the map $(s,t)\co G_1\to G_0\times G_0$ is proper. It is called \emph{\'etale} if $s,t\co G_1\to G_0$ are local diffeomorphisms.
An \emph{orbifold groupoid} is a Lie groupoid which is proper and \'etale.
  
  An orbifold groupoid $G$ is called \emph{effective} if the map $G_p\to \text{Diff}_p$ taking each arrow $g\in G_p$ to the induced germ of diffeomorphisms around $p$ fixing $p$ is injective. 
\end{definition}

\begin{example}
  Let $G$ be an orbifold groupoid and let $H$ be a closed Lie subgroupoid (i.e., $H_1$ is closed in $G_1$). Then $H$ is also an orbifold groupoid: the maps $s^H,t^H\co H_1\to H_0$ are local diffeomorphisms, since they are the restrictions of $s,t$, respectively. Since $H_1$ is closed in $G_1$, the restriction $(s^H, t^H)$ of $(s,t)$ to $H_1$ is proper. 
\end{example}

\begin{remark}
  We should note that an orbifold groupoid is often defined to be any Lie groupoid which is Morita equivalent to a proper \'etale groupoid (see for example \cite{Moerdijk2002}), but such a definition is essentially equivalent to the notion used in this text.
\end{remark}

We now recall the definition of an (effective) orbifold in terms of charts (compare \cite{Chen2002, Moerdijk1997, Moerdijk2003}).

\begin{definition}
  Let $X$ be a second countable Hausdorff space. An $n$-dimensional orbifold \emph{chart} on $X$ is a quadruple $(U,\widetilde{U}/\Gamma,\pi)$ in which $U\subset X$ is open, $\widetilde{U}$ is an $n$-dimensional connected manifold, $\Gamma$ is a finite group acting smoothly and effectively on $\widetilde{U}$ and $\pi\co \widetilde{U}\to U$ is a continuous $\Gamma$-invariant map which induces a homeomorphism $\overline{\pi}\co\widetilde{U}/\Gamma\to U$. Given charts $(U_i,\widetilde{U}_i/\Gamma_i,\pi_i)$, $i=1,2$, such that $U_1\subset U_2$, an \emph{injection} from $\pi_1$ to $\pi_2$ is a smooth embedding $\lambda\co\widetilde{U}_1\to\widetilde{U}_2$ such that $\pi_2\circ\lambda=\pi_1$. Two charts $(U_i,\widetilde{U}_i/\Gamma_i,\pi_i)$, $i=1,2$, are \emph{compatible} if for every $x\in U_1\cap U_2$ there is a chart $(W,\widetilde{W}/K,\sigma)$ such that $x\in W\subset U_1\cap U_2$ and, for each $i=1,2$, an injection $\lambda_i\co\widetilde{W}\to\widetilde{U}_i$ from $\sigma$ to $\pi_i$. An orbifold \emph{atlas} on $X$ is a collection $\{(U_\alpha,\widetilde{U}_\alpha/\Gamma_\alpha,\pi_\alpha)\}_\alpha$ of compatible charts on $X$ such that $X=\bigcup_\alpha U_\alpha$.
  
  An $n$-dimensional \emph{orbifold} is a pair $(X,\mathcal{A})$ of a second countable Hausdorff space $X$ and a maximal orbifold atlas $\mathcal{A}$.
  
  Given a point $x$ in an orbifold $(X,\mathcal{A})$, its \emph{isotropy} is the isomorphism class of $\Gamma_{\widetilde{x}}$ for some (hence every) chart $(U,\widetilde{U}/\Gamma,\pi)$ in $\mathcal{A}$ and $\widetilde{x}\in\pi^{-1}(x)$. $x$ is \emph{regular} if its isotropy is trivial, otherwise $x$ is called \emph{singular}. By $(X,\mathcal{A})^\reg$ we will denote the (open and dense) subset of regular points in an orbifold $(X,\mathcal{A})$.
  
  Given two orbifolds $(X,\mathcal{A})$, $(X^\prime,\mathcal{A}^\prime)$, the \emph{product orbifold} is the set $X\times X^\prime$ together with the maximal atlas (denoted by $\mathcal{A}\times\mathcal{A}^\prime$) containing all charts of the form $(U\times U^\prime,(\widetilde{U}\times\widetilde{U}^\prime)/(\Gamma\times\Gamma^\prime),\pi\times\pi^\prime)$ where $(U,\widetilde{U}/\Gamma,\pi)\in\mathcal{A}$ and $(U^\prime,\widetilde{U}^\prime/\Gamma^\prime,\pi^\prime)\in\mathcal{A}^\prime$.
\end{definition}

\begin{remark}
  Note that, following \cite{Chen2002}, we allow that each chart domain $\widetilde{U}$ is a manifold. Choosing these domains sufficiently small as in \cite[Lemma 4.1.1]{Chen2002}, we could assume that they are open subsets of $\mathbb{R}^n$ and obtain an equivalent (and apparently more common) orbifold definition.
  
  We should also emphasize that the term ``orbifold groupoid'' does not include effectiveness conditions, whereas every ``orbifold'' (sometimes referred to as ``reduced'' or ``effective'' orbifold) is assumed to be equipped with effective group actions as above. (Note, however, that there is a notion of ``ineffective orbifold'' using charts which corresponds to the class of orbifold groupoids \cite{Pronk2016}.)
\end{remark}

\begin{definition}
  \label{def:suborbi-cover}
  Let $\mathcal{A}=\{(U_i,\widetilde{U}_i/\Gamma_i,\pi_i)\}_{i\in I}$ be an $n$-dimensional orbifold atlas on a second countable Hausdorff space $X$ and let $Y\subset X$ be a subset. A $k$-dimensional \emph{suborbifold cover} on $Y\subset X$ with respect to $\mathcal{A}$ is a subset $J\subset I$ together with a family $\{\widetilde{V}_j\}_{j\in J}$ of connected $k$-dimensional manifolds such that
  \begin{enumerate}
  \item \label{def:suborbi-cover1} for every $j\in J$ there is a subgroup $\Delta_j$ of $\Gamma_j$ such that $\widetilde{V}_j$ is a $\Delta_j$-submanifold of $\widetilde{U}_j$ with the property that $\pi_j(\widetilde{V}_j)$ is open in $Y$,
  \item $\bigcup_j\pi_j(\widetilde{V}_j)=Y$.
  \end{enumerate}
  If each $\widetilde{V}_j$ is closed in $\widetilde{U}_j$, we call $\{\widetilde{V}_j\}_{j\in J}$ a \emph{closed cover}.
\end{definition}

Note that, by \cite[Proposition 3.3]{Weilandt2017} a closed suborbifold cover as above defines an orbifold atlas $\{(V_j,\widetilde{V}_j/(\Delta_j/K_j),{\pi_j}_{|\widetilde{V}_j})\}_{j\in J}$ on $Y$ (with $K_j\subset\Delta_j$ denoting the corresponding kernel).

\begin{definition}
 Given an orbifold atlas $\mathcal{A}$ on a second countable Hausdorff space $X$ and a closed suborbifold cover $\{\widetilde{V}_j\}$ on $Y\subset X$ as in Definition \ref{def:suborbi-cover}, the orbifold atlas $\{(V_j,\widetilde{V}_j/(\Delta_j/K_j),{\pi_j}_{|\widetilde{V}_j})\}_{j\in J}$ is called the atlas on $Y$ \emph{induced} by $\{\widetilde{V}_j\}_{j\in J}$ (and $\mathcal{A}$).
\end{definition}

\begin{remark}
  Strictly speaking, the groups $\Delta_j$ satisfying the conditions from Definition \ref{def:suborbi-cover} \eqref{def:suborbi-cover1} may not be unique. But since the choice of another $\Delta_j^\prime$ would just induce an equivalent effective action (by $\Delta_j^\prime/K_j^\prime$) on $\widetilde{V}_j$ (as follows from \cite[Corollary 3.10]{Lange2018}), we still refer to \emph{the} induced atlas in the definition above.
\end{remark}

\begin{definition}
A suborbifold cover as in Definition \ref{def:suborbi-cover} is \emph{embedded} if each $\Delta_j$ can be chosen to act effectively on $\widetilde{V}_j$. It is called \emph{fully embedded} if each $\Delta_j$ can be chosen such that $\widetilde{V}_j$ is a full effective $\Delta_j$-submanifold.
\end{definition}

\begin{remark}
By \cite[Remark 3.2]{Weilandt2017} the existence of a (embedded) suborbifold cover implies the existence of a closed (embedded) suborbifold cover. By the same argument, an analogous statement holds for a fully embedded cover.
\end{remark}

The first two items of the definition below are just \cite[Definition 3.1]{Weilandt2017}, now using the suborbifold cover terminology introduced above.

\begin{definition}
\begin{enumerate}
  \item A \emph{suborbifold} of an orbifold $(X,\mathcal{A})$ is a subset of $X$ which admits a suborbifold cover with respect to $\mathcal{A}$.
  \item An \emph{embedded suborbifold} of an orbifold $(X,\mathcal{A})$ is a subset of $X$ which admits an embedded suborbifold cover with respect to $\mathcal{A}$.
   \item A \emph{fully embedded suborbifold} of an orbifold $(X,\mathcal{A})$ is a subset of $X$ which admits a fully embedded suborbifold cover with respect to $\mathcal{A}$.

\end{enumerate}
\end{definition}

\begin{remark}
In this text we introduce the notion of fully embedded suborbifolds instead of working with ``full'' suborbifolds as in \cite{Weilandt2017} (inspired by the homonymous notion in \cite{Borzellino2012}) to guarantee that isotropy is preserved. For instance, a single point in an orbifold is always ``full'' (and embedded) but it is only fully embedded if it is regular in the ambient orbifold.
\end{remark}

\cite[Proposition 3.3]{Weilandt2017} shows that every suborbifold carries a canonical orbifold structure, for which we introduce the following notation.

\begin{definition}
\label{def:suborbi-atlas}
  Given a suborbifold $Y$ of an orbifold $(X,\mathcal{A})$, the maximal orbifold atlas containing all atlases on $Y$ induced by closed suborbifold covers with respect to $\mathcal{A}$ will be denoted by $\mathcal{A}_{|Y}$.
\end{definition}

\section{Subgroupoids leading to suborbifolds}
\label{sec:groupoid-orbifold}
Following \cite{Moerdijk1997}, to an effective orbifold groupoid $G$ one can associate a canonical orbifold atlas on $|G|$ : Let $x=Gp\in |G|$. By the proof of the implication $(4)\Rightarrow (1)$ in \cite[Theorem 4.1]{Moerdijk1997}, each $g\in G_p$ has an open neighborhood $O_g$ in $G_1$ such that $s_{|O_g}$ and $t_{|O_g}$ are embeddings and there is a connected open neighborhood $N_p$ of $p$ such that 
  \begin{equation}
  	(s,t)^{-1}(N_p\times N_p)=\coprod_{g\in G_p} O_g
  	\label{eq:disj}
  \end{equation}
  and $N_p$ is invariant under the effective $G_p$-action given by $g\cdot q:=\tilde{g}(q):=t\circ(s_{|O_g})^{-1}(q)$. Note that condition \eqref{eq:disj} implies that two points in $N_p$ are in the same $G_p$-orbit if and only if they are in the same $G$-orbit. Using the canonical projection $\pi_p\co N_p\to N_p/G_p$, we obtain an orbifold chart $(\pi_p(N_p), N_p/G_p,\pi_p)$ on $|G|$ around $x$.
  
  It has been shown in \cite{Moerdijk1997} that any two charts as above are compatible. In particular, the maximal atlas on $|G|$ containing the charts $(\pi_p(N_p), N_p/G_p,\pi_p)$, $p\in G$, does not depend on the choice of concrete $O_g$, $N_p$ with the properties above. We shall denote this atlas by $\At(G)$. Applying the construction above to subgroupoids, we obtain the following theorem. (For item \eqref{th:groupoid-orbifold3} also recall Definition \ref{def:suborbi-atlas}.)

\begin{theorem}
\label{th:groupoid-orbifold}
  Let $H$ be a Lie subgroupoid of an effective orbifold groupoid $G$ such that $|\iota|\co |H|\to |G|$ is a topological embedding. Then:
  \begin{enumerate}
  \item \label{th:groupoid-orbifold1} The orbit space $|H|$ is a suborbifold of $(|G|,\At(G))$.
  \item \label{th:groupoid-orbifold3} If $H$ is an effective orbifold groupoid, then $\At(G)_{||H|}=\At(H)$
  and $|H|$ is an embedded suborbifold of $(|G|,\At(G))$.   If, moreover, $H$ is full in $G$, then $|H|$ is a fully embedded suborbifold of $(|G|,\At(G))$.
  \end{enumerate}
\end{theorem}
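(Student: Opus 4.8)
The plan is to manufacture a suborbifold cover of $|H|\hookrightarrow|G|$ out of the canonical charts of $\At(G)$, one per point of $H_0$, and then, under the extra hypotheses of (2), to identify the induced atlas with $\At(H)$ through the étale ``action groupoid'' local models. Two preliminary facts will be used throughout. First, $H$ is automatically étale: $s^H=s^G|_{H_1}$ (and likewise $t^H$) is at once a submersion (Lie groupoid axiom) and an immersion (restriction of the local diffeomorphism $s^G$), hence a local diffeomorphism; in particular $H_p\subset G_p$ is finite. Second, since $|\iota|$ is injective, Lemma~\ref{lemma:essinj} tells us that for $p,q\in H_0$ a $G$-arrow $p\to q$ forces an $H$-arrow $p\to q$ — this is the bridge linking the local $G_p$-actions on chart neighbourhoods to the arrows of $H$. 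I also record that $H_0\to|H|$ is open and that $Y:=|\iota|(|H|)$ carries the subspace topology since $|\iota|$ is a topological embedding.

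For (1): given $p\in H_0$ I would take data $N_p,\{O_g\}_{g\in G_p}$ defining the chart $(\pi_p(N_p),N_p/G_p,\pi_p)$ of $\At(G)$ at $[Gp]$, shrinking $N_p$ in a $G_p$-invariant way (using $|G_p|<\infty$) so that moreover $N_p\cap H_0$ is closed in $N_p$. Let $\widetilde V_p$ be the connected component of $N_p\cap H_0$ through $p$ — a connected $(\dim H_0)$-dimensional submanifold of $N_p$, closed in $N_p$ and open in $N_p\cap H_0$ — and set $\Delta_p:=\{g\in G_p:\tilde g(\widetilde V_p)=\widetilde V_p\}$, a subgroup leaving $\widetilde V_p$ invariant. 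Then $\pi_p(\widetilde V_p)$ corresponds under $|\iota|$ to the image in $|H|$ of the open set $\widetilde V_p\subset H_0$ (essential injectivity shows that $G$-equivalent points of $\widetilde V_p$ are already $H$-equivalent), hence is open in $Y$, and these sets cover $Y$ because $p\in\widetilde V_p$. The heart of the matter is that $\widetilde V_p$ is a $\Delta_p$-submanifold of $N_p$: given $g\in G_p$, $q\in\widetilde V_p$ with $\tilde g(q)\in\widetilde V_p$, essential injectivity yields an $H$-arrow $b\colon q\to\tilde g(q)$ lying in a unique sheet $O_{g_1}$, with $\tilde{g_1}(q)=\tilde g(q)$; since $s^H$ restricted to $O_{g_1}\cap H_1$ is a local diffeomorphism onto an open subset of $H_0$, $b$ extends to a local bisection of $H$ transporting $\widetilde V_p$ into $N_p\cap H_0$, and a connectedness argument — using that $\widetilde V_p$ is closed in $N_p$, and that sheet-points lying outside $H_1$ are limits of $H$-arrows between points of $\widetilde V_p$, whose targets essential injectivity again forces into $\widetilde V_p$ — gives $\tilde{g_1}(\widetilde V_p)=\widetilde V_p$, i.e.\ $g_1\in\Delta_p$; then $\delta:=g_1$ does the job. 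This exhibits $\{\widetilde V_p\}_{p\in H_0}$, over the subfamily $\{(\pi_p(N_p),N_p/G_p,\pi_p)\}_{p\in H_0}$ of $\At(G)$, as a suborbifold cover of $Y=|H|$.

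For (2): if $H$ is an effective orbifold groupoid then it is proper, so for each $p$ one may choose $N_p$ with $\widetilde V_p$ equal to a connected $H_p$-invariant neighbourhood $W_p\subset N_p$ of $p$ satisfying $(s^H,t^H)^{-1}(W_p\times W_p)=\coprod_{h\in H_p}O^H_h$ and with $H_p$ acting effectively on $W_p$ — so that $(\pi^H_p(W_p),W_p/H_p,\pi^H_p)$ is a defining chart of $\At(H)$ — and the étale local models identify $G|_{N_p}\cong G_p\ltimes N_p$, $H|_{W_p}\cong H_p\ltimes W_p$ with $W_p$ an $H_p$-invariant submanifold of $N_p$. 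Essential injectivity of $\iota$ restricts to essential injectivity of $H_p\ltimes W_p\hookrightarrow G_p\ltimes N_p$ (an arrow between points of $W_p$ in the larger groupoid is a $G$-arrow between them, hence comes from an $H$-arrow, which then lies over $W_p$), so Proposition~\ref{prop:subgroupoids}\eqref{prop:subgroupoids:2} makes $W_p$ an $H_p$-submanifold of $N_p$; taking $\widetilde V_p=W_p$, $\Delta_p=H_p$ in the construction of (1) yields an embedded suborbifold cover, and because $\pi_p|_{W_p}$ corresponds to $\pi^H_p$ under $|\iota|$ the chart it induces on $Y$ at $[Gp]$ is exactly $W_p/H_p$, the $\At(H)$-chart; as the atlas induced by this cover then lies in both $\At(G)_{||H|}$ and $\At(H)$, maximality forces $\At(G)_{||H|}=\At(H)$. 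If in addition $H$ is full in $G$, then $H_p\ltimes W_p$ is fully faithful in $G_p\ltimes N_p$, so by Proposition~\ref{prop:subgroupoids}\eqref{prop:subgroupoids:3} $W_p$ is a full (effective) $H_p$-submanifold and the cover is fully embedded.

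The step I expect to be the main obstacle is the $\Delta_p$-submanifold property in (1) when $H$ is not proper: then $O_{g_1}\cap H_1$ need not be closed in $O_{g_1}$, so the locus of points of $\widetilde V_p$ whose $O_{g_1}$-arrow actually lies in $H_1$ is merely open, and ``$\tilde{g_1}$ preserves $\widetilde V_p$'' must be propagated across the boundary of that locus by passing to closures in $N_p$ and re-invoking essential injectivity. Under the hypotheses of (2) properness eliminates this difficulty at once, and the only remaining work there is the routine identification of charts giving $\At(G)_{||H|}=\At(H)$.
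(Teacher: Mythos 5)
Your proposal follows the paper's route very closely. In part (1) both arguments build the suborbifold cover from the canonical charts $(\pi_p(N_p),N_p/G_p,\pi_p)$ of $\At(G)$, take $\widetilde V_p$ (the paper's $S_p$) to be the connected component of $H_0\cap N_p$ through $p$, note that its image is open in $|H|$ because the orbit projection of $H$ is open and $|\iota|$ is a topological embedding, and use injectivity of $|\iota|$ via Lemma \ref{lemma:essinj} to produce, for $q,\tilde g(q)\in\widetilde V_p$, an $H$-arrow $q\to\tilde g(q)$ realizing the relation. In part (2) both use properness of $H$ to pass to the local model $(s^H,t^H)^{-1}(S_p\times S_p)=\coprod_{h\in H_p}P_h$ with $P_h\subset O_h\cap H_1$, arrange effectiveness and closedness by shrinking, observe that the resulting chart $(\pi_p(S_p),S_p/H_p,{\pi_p}_{|S_p})$ lies in both $\At(H)$ and $\At(G)_{||H|}$, and conclude by maximality, the full case reducing to $H_p=G_p$; your detour through Proposition \ref{prop:subgroupoids} is only cosmetic.

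The one genuine divergence is the step you yourself flag as the obstacle. The paper takes $\Delta_p=H_p=H_1\cap G_p$ and, having found the $H$-arrow $h\colon q\to\tilde g(q)$, simply writes $\tilde h(q)=\tilde g(q)$; you instead take $\Delta_p$ to be the stabilizer of $\widetilde V_p$ in $G_p$ and try to prove that the sheet $O_{g_1}$ containing $h$ preserves $\widetilde V_p$. Your caution is well placed: when $H$ is not proper, the element $g_1\in G_p$ indexing that sheet need not itself lie in $H_1$. (Take $G=\mathbb{Z}_2\ltimes\mathbb{R}$ with the sign action and $H$ the open subgroupoid obtained by deleting the single arrow $(\sigma,0)$; the hypotheses of the theorem hold, yet at $p=0$ one has $H_p=\{e\}$ while $S_p=N_p$, which is not an $\{e\}$-submanifold of $N_p$. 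So some argument of your kind, with a larger $\Delta_p$, is genuinely needed at this point.) However, your connectedness argument is only sketched, and its ``closed'' half is exactly where the difficulty sits: at a limit point $q'$ of the locus where the $O_{g_1}$-lift of $\widetilde V_p$ stays inside $H_1$, essential injectivity returns an $H$-arrow $q'\to\tilde g_1(q')$ that may live in a different sheet $O_{g_2}$, with $\tilde g_2$ agreeing with $\tilde g_1$ only at $q'$; the set on which you run the open--closed argument is therefore not obviously closed. To finish, you need an additional ingredient --- for instance, finiteness of $G_p$ combined with a chain argument over the finitely many loci attached to the sheets realized by $H$-arrows --- so as written this step remains a gap, albeit one you have correctly located and one that the paper's own, much terser, treatment does not resolve either.
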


\begin{proof}
  To see \eqref{th:groupoid-orbifold1}, first let $x=Hp\in |H|$ and consider the chart $(N_p/G_p, N_p/G_p,\pi_p)$ of $|G|$ around $x$ as above. Now consider $H_p=H_1\cap G_p$ and let $S_p$ denote the connected component of $H_0\cap N_p$ containing $p$. Then $S_p$ is an $H_p$-invariant submanifold of $N_p$. To see that it is an $H_p$-submanifold, let $q\in S_p$ and $g\in G_p$ such that $\tilde{g}(q)\in S_p$. Since $q,\tilde{g}(q)\in N_p$, we have $\tilde{g}(q)\in Gq$. Since $H$ is essentially injective in $G$, we obtain $\tilde{g}(q)\in H_0\cap Gq=Hq$. This means that there is an arrow $h \in H_1$ from $q$ to $\tilde{g}(q)$ and hence $\tilde{h}(q)=\tilde{g}(q)$.
  
  To finish our argument for \eqref{th:groupoid-orbifold1}, we are left to show that $\pi_p(S_p)$ is an open subset of $|H|$. It obviously is a subset. To see that it is open, note that the projection $\sigma\co H_0\to |H|$ of the Lie groupoid $H$ is open
and hence $\sigma^{-1}(\pi_p(S_p))=\sigma^{-1}(\sigma(S_p))$ is open in $H_0$.

  To see \eqref{th:groupoid-orbifold3}, note that for each $p\in H_0$ and $h\in H_p$ we have
  $(s^H,t^H)^{-1}(S_p\times S_p)=\coprod_{h\in H_p} P_h$ with $P_h \subset O_h\cap H_1$ an open neighborhood of $h$ in $H_1$ and that $S_p$ is invariant under the $H_p$-action given by $h\circ q:=\tilde{h}(q):=t^H\circ(s^H_{|P_h})^{-1}(q)$, which coincides with the restriction of the $G_p$-action on $N_p$ to $S_p$. Diminishing $N_p$, we can assume that $S_p$ is closed and that $H_p$ acts effectively on $S_p$. Hence, $(\pi_p(S_p),S_p/H_p,{\pi_p}_{|S_p})$ lies both in $\At(H)$ and ${\At(G)}_{||H|}$. Maximality implies the first statement of \eqref{th:groupoid-orbifold3}.
  
  For the full case in \eqref{th:groupoid-orbifold3} note that if $H$ is a full subgroupoid, then $H_p=G_p$ and hence $S_p$ is a full $H_p$-submanifold of $N_p$.
\end{proof}

As a rather simple application of the theorem above we can give an alternative proof of a special case of \cite[Corollary 4.6]{Weilandt2017}.

\begin{corollary}
  Let $K$ be a discrete group acting smoothly, properly and effectively on a manifold $M$. If $L$ is a subgroup of $K$, $N$ is an $L$-submanifold of $M$ and $N/L\ni Lp\mapsto Kp\in M/K$ is a topological embedding, then $N/L$ is a suborbifold of $M/K$.
\end{corollary}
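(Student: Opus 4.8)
The plan is to realise both orbifolds as orbit spaces of action groupoids and then to invoke Theorem \ref{th:groupoid-orbifold} \eqref{th:groupoid-orbifold1}. First I would set $G:=K\ltimes M$ and check that it is an effective orbifold groupoid: since $K$ is discrete, $s,t$ are local diffeomorphisms, so $G$ is \'etale; properness of the $K$-action on $M$ is, up to the coordinate swap on $M\times M$, exactly properness of $(s,t)\co G_1\to G_0\times G_0$, so $G$ is proper; and effectiveness of the $K$-action gives injectivity of $G_p\to\text{Diff}_p$ for every $p\in M$. Then $|G|=M/K$ and $\At(G)$ is the canonical orbifold atlas on $M/K$ (the one appearing in \cite[Corollary 4.6]{Weilandt2017}).

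Next I would set $H:=L\ltimes N$. Because $K$ is discrete, $L$ is automatically closed in $K$, and $N$ is an $L$-invariant submanifold of $M$ by hypothesis, so Proposition \ref{prop:subgroupoids} \eqref{prop:subgroupoids:1} makes $H$ a Lie subgroupoid of $G$. Under the identifications $|H|=N/L$ and $|G|=M/K$ the induced map $|\iota|$ is exactly $N/L\ni Lp\mapsto Kp\in M/K$, which is a topological embedding by assumption. Theorem \ref{th:groupoid-orbifold} \eqref{th:groupoid-orbifold1} then yields that $|H|=N/L$ is a suborbifold of $(|G|,\At(G))$, that is, of $M/K$ with its canonical orbifold structure, which is the claim.

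I want to stress that this argument needs neither that $H$ be an orbifold groupoid nor that $L$ act effectively on $N$: item \eqref{th:groupoid-orbifold1} of the theorem only asks for $H$ to be a Lie subgroupoid with $|\iota|$ a topological embedding, and it does not even use essential injectivity of $H$ (which by Proposition \ref{prop:subgroupoids} \eqref{prop:subgroupoids:2} is the content of the $L$-submanifold hypothesis on $N$, and would be relevant only for the stronger conclusions of item \eqref{th:groupoid-orbifold3}).

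The routine part is the groupoid/quotient dictionary in the first two paragraphs; the one step I expect to require genuine care is deducing the germ-injectivity condition in the definition of an effective orbifold groupoid from plain effectiveness of the $K$-action, and, relatedly, identifying $\At(K\ltimes M)$ with the usual orbifold structure on $M/K$. I would handle this by passing to small $K_p$-invariant neighbourhoods of points $p\in M$ on which an element of $K_p$ acting as the identity germ is forced to be trivial, after which everything else is a direct translation.
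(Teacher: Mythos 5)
Your proof is correct and follows the same route as the paper's, which simply identifies $|K\ltimes M|=M/K$ and $|L\ltimes N|=N/L$ and invokes Theorem \ref{th:groupoid-orbifold} \eqref{th:groupoid-orbifold1}; the extra verifications you supply (\'etaleness and properness of $K\ltimes M$, effectiveness, and Proposition \ref{prop:subgroupoids} \eqref{prop:subgroupoids:1}) are exactly the details the paper leaves implicit. One small caveat on your parenthetical remark: the proof of Theorem \ref{th:groupoid-orbifold} \eqref{th:groupoid-orbifold1} \emph{does} use essential injectivity of $H$ (to show that $S_p$ is an $H_p$-submanifold of $N_p$), but since that property follows from injectivity of $|\iota|$ via Lemma \ref{lemma:essinj}, your conclusion that the $L$-submanifold hypothesis is redundant in the presence of the topological-embedding hypothesis is still valid.
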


\begin{proof}
  Since $|K\ltimes M|=M/K$ and $|L\ltimes N|=N/L$, the result follows from part \eqref{th:groupoid-orbifold1} of Theorem \ref{th:groupoid-orbifold} .
\end{proof}

\section{Suborbifolds leading to subgroupoids}
\label{sec:orbifold-groupoid}
Given a countable orbifold atlas $\mathcal{A}=\{(U_i,\widetilde{U}_i/\Gamma_i,\pi_i)\}_{i\in I}$ on a second countable Hausdorff space $X$, we consider the groupoid $\mathcal{G}(\mathcal{A})$ given by the following construction (see \cite{Moerdijk1997}):

Set $\mathcal{G}(\mathcal{A})_0=\coprod_{i\in I}\widetilde{U}_i$. A \emph{transition} (on $\mathcal{G}(\mathcal{A})_0$) is a diffeomorphism between open subsets of $\mathcal{G}(\mathcal{A})_0$. Writing $\pi=\coprod_{i\in I}\pi_i$ and denoting by $P(\mathcal{A})$ the pseudogroup of transitions $f$ which satisfy $\pi\circ f=\pi_{|\dom f}$, set \[ \mathcal{G}(\mathcal{A})_1=\{[f]_p;~f\in P(\mathcal{A}),~p\in\dom f\},\] where $[f]_p$ denotes the germ of $f$ at $p$. The topology and smooth structure on $\mathcal{G}(\mathcal{A})_1$ are determined by requiring that, for each $f\in P(\mathcal{A})$, the bijection $\dom f\ni p\mapsto [f]_p\in\{[f]_q;~q\in\dom f\}$ be a diffeomorphism. The structure maps are given by $s([f]_p)=p$, $t([f]_p)=f(p)$, $m([g]_{f(p)},[f]_p)=[g\circ f]_p$, $u(p)=[\id]_p$, $i([f]_p)=[f^{-1}]_{f(p)}$.
With these definitions $\mathcal{G}(\mathcal{A})$ becomes an effective orbifold groupoid.
By $\varepsilon_\mathcal{A}$ we denote the homeomorphism $|\mathcal{G}(\mathcal{A})|\ni\mathcal{G}(\mathcal{A})p\mapsto \pi(p)\in X$.

Recall from \cite{Moerdijk2003} that, up to certain equivalences, the construction above can be seen as inverse to the construction of an orbifold out of an effective orbifold groupoid given in Section \ref{sec:groupoid-orbifold}: Given an orbifold $(X,\mathcal{A})$, the orbifold
$(|\mathcal{G}(\mathcal{A})|,\At(\mathcal{G}(\mathcal{A})))$ is diffeomorphic to $(X,\mathcal{A})$. On the other hand, given an effective orbifold groupoid $G$, the groupoid $\mathcal{G}(\At(G))$ is Morita equivalent to $G$. Since we will not work with the latter concept, we refer the reader to \cite{Moerdijk2003} for more details.

Now consider a closed suborbifold cover $\{\widetilde{V}_j\}_{j\in J}$ on a set $Y$ with respect to a countable orbifold atlas $\mathcal{A}$ on some second countable Hausdorff space $X$ and denote the induced orbifold atlas on $Y$ by $\mathcal{B}$. We would like to construct a certain Lie groupoid embedding $\mathcal{G}(\mathcal{B})\to\mathcal{G}(\mathcal{A})$.

\begin{definition}
\label{def:strongcover}
  Let $X$ be a second countable Hausdorff space, let $\mathcal{A}$ be a countable orbifold atlas on $X$ and let $Y$ be a subset of $X$. Let $\{\widetilde{V}_j\}_{j\in J}$ be a closed suborbifold cover on $Y$ with respect to $\mathcal{A}$. $\{\widetilde{V}_j\}_{j\in J}$ is called \emph{strong} if, with $\mathcal{B}$ denoting the induced orbifold atlas on $Y$, there is a Lie groupoid embedding $\Phi\co \mathcal{G}(\mathcal{B})\to\mathcal{G}(\mathcal{A})$ such that (with $Y\hookrightarrow X$ denoting the inclusion) the following diagram commutes.

\begin{center}
\begin{tikzcd}
\lvert\mathcal{G}(\mathcal{B})\rvert \arrow[r, "\lvert\Phi\rvert"] \arrow[d, "\varepsilon_\mathcal{B}"'] & \lvert\mathcal{G}(\mathcal{A})\rvert \arrow[d, "\varepsilon_\mathcal{A}"] \\
X \arrow[r, hook]                                                                    & Y
\end{tikzcd}
\end{center}

\end{definition}

\begin{examples}
\label{ex:strong}
\begin{enumerate}
  \item \label{ex:strong:1} Every closed embedded suborbifold cover $\{\widetilde{V}_1\}$ (i.e. consisting of a single element) with respect to a countable orbifold atlas $\mathcal{A}=\{(U_i,\widetilde{U}_i/\Gamma_i,\pi_i)\}_{i\in I}$ is strong: Letting $\mathcal{B}=\{(V_1,\widetilde{V}_1/\Delta_1,{\pi_1}_{|\widetilde{V}_1})\}$ (with $\Delta_1\subset \Gamma_1$) denote the induced atlas on the suborbifold, arrows in $\mathcal{G}(\mathcal{B})$ correspond to elements of $\Delta_1$ (as follows directly from \cite[Lemma 2.11]{Moerdijk2003}) and hence the inclusion $\Delta_1\hookrightarrow \Gamma_1$ induces a canonical embedding $\Phi_1\co\mathcal{G}(\mathcal{B})_1\to\mathcal{G}(\mathcal{A})_1$. Together with the inclusion $\Phi_0\co\mathcal{G}(\mathcal{B})_0=\widetilde{V}_1\hookrightarrow\coprod_{i\in I}\widetilde{U}_i=\mathcal{G}(\mathcal{A})_0$, we obtain a Lie groupoid embedding $\Phi$ satisfying $\varepsilon_\mathcal{A}\circ |\Phi|=\varepsilon_\mathcal{B}$.
  
  \item The following example illustrates that the groupoid embedding $\Phi$ in Definition \ref{def:strongcover} is not necessarily unique: We can consider an open disk $\widetilde{U}$ in $\mathbb{R}^2$ with center $(0,0)$ and the rotation $\rho$ by $\pi$. Setting $\Gamma=\langle\rho\rangle\simeq\mathbb{Z}_2$, we obtain the orbifold $U=\widetilde{U}/\Gamma$. Taking $\mathcal{A}$ to consist of two copies $\widetilde{U}_i\to U_i$, $i=1,2$ of the chart given by the canonical projection $\widetilde{U}\to U$ and $\widetilde{V}_1=\widetilde{V}_2=\{(0,0)\}$, we obtain a strong suborbifold cover on $V=\widetilde{V}_1/\{\id\}\subset U$ with respect to $\mathcal{A}$. Letting $\Phi_0\co \coprod_i\widetilde{V}_i\to \coprod_i\widetilde{U}_i$ denote the inclusion, we can define $\Phi_1$ by sending $\id\co\widetilde{V}_1\to\widetilde{V}_2$ either to the identity $\widetilde{U}_1\to\widetilde{U}_2$ or to $\rho$ and obtain two different groupoid embeddings $\Phi$ as in Definition \ref{def:strongcover}.
\end{enumerate}
\end{examples}

\begin{remark}
\begin{enumerate}
\item The commutative diagram in the definition above directly implies that $\Phi$ is essentially injective.
\item The term ``strong'' refers to the idea of a ``strong map'' of orbifolds in \cite{Moerdijk1997}.
\end{enumerate}
\end{remark}

\begin{definition}
\label{def:strong-suborbifold}
  Let $(X,\mathcal{A})$ be an orbifold. A subset $Y$ of $X$ is a \emph{strong suborbifold} if there is a strong suborbifold cover $\{\widetilde{V}_j\}_{j\in J}$ on $Y$ with respect to some countable atlas contained in $\mathcal{A}$.
\end{definition}

\begin{example}
\label{ex:discrete}
  Every discrete subset of an orbifold is a zero-dimensional strong suborbifold: If $(X,\mathcal{A})$ is an orbifold and $Y\subset X$ is discrete, then $Y=\{x_1,x_2,\ldots\}$. For each $j$ let $(U_j,\widetilde{U}_j/\Gamma_j,\pi_j)\in\mathcal{A}$ such that $x_j\in U_j$ and complete $\{(U_j,\widetilde{U}_j/\Gamma_j,\pi_j)\}_{j\ge 1}$ to a countable atlas $\mathcal{A}^\prime\subset \mathcal{A}$ on $X$. Fixing $\widetilde{x}_j\in\pi_j^{-1}(x_j)$ and setting $\widetilde{V}_j:=\{\widetilde{x}_j\}$, the family $\{\widetilde{V}_j\}_{j\ge 1}$ is a closed suborbifold cover of $Y$ with respect to $\mathcal{A}^\prime$. Since every transformation on $\coprod_j\widetilde{V}_j$ is a restriction of the identity, we can just extend it to the identity on $\widetilde{U}_j$ to obtain an embedding $\Phi_1$. Together with $\Phi_0$ given by the inclusions $\widetilde{V}_j\hookrightarrow\widetilde{U}_j$, we obtain an embedding $\Phi\co\mathcal{G}(\mathcal{B})\to\mathcal{G}(\mathcal{A}^\prime)$ such that $\varepsilon_{\mathcal{A}'}\circ |\Phi|=\varepsilon_\mathcal{B}$.
\end{example}

The following proposition gives an alternative definition of a strong suborbifold.
\begin{proposition}
\label{prop:strong-suborbifold}
  Given an orbifold $(X,\mathcal{A})$, a subset $Y$ of $X$ is a strong suborbifold if and only if there is a suborbifold cover $\{\widetilde{V}^\prime_j\}_{j\in J}$ with respect to some countable atlas  $\mathcal{A}^\prime\subset\mathcal{A}$ and (with $\mathcal{B}^\prime$ denoting the orbifold atlas on $Y$ induced by $\{\widetilde{V}^\prime_j\}_{j\in J}$) a groupoid embedding $\Phi^\prime: \mathcal{G}(\mathcal{B}^\prime)\to\mathcal{G}(\mathcal{A^\prime})$ such that $\Phi^\prime_0$ is the inclusion.
\end{proposition}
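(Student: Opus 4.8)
\emph{Plan of proof.} The plan is to prove both implications, using throughout the identification of the restriction of a groupoid $\mathcal{G}(\mathcal{C})$ to a single chart domain with the action groupoid of the corresponding finite group (\cite[Lemma 2.11]{Moerdijk2003}), exactly as in Examples~\ref{ex:strong}\,\eqref{ex:strong:1}, together with the elementary remark that whenever $\Phi'_0$ is the inclusion the square in Definition~\ref{def:strongcover} commutes automatically: for $p\in\widetilde{V}'_j$ both $\varepsilon_{\mathcal{A}'}(|\Phi'|(\mathcal{G}(\mathcal{B}')p))$ and $\varepsilon_{\mathcal{B}'}(\mathcal{G}(\mathcal{B}')p)$ are the point $\pi_j(p)\in Y\subset X$. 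So for the ``only if'' direction it suffices to straighten $\Phi_0$ into an inclusion, and for the ``if'' direction it suffices to replace the given (possibly non-closed) cover by a closed one while keeping a groupoid embedding with inclusion as object map.

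For the ``only if'' direction I would start from a strong suborbifold cover $\{\widetilde{V}_j\}_{j\in J}$ with respect to a countable $\mathcal{A}'\subset\mathcal{A}$, with induced atlas $\mathcal{B}$ and embedding $\Phi\co\mathcal{G}(\mathcal{B})\to\mathcal{G}(\mathcal{A}')$ making the square commute (hence essentially injective, as remarked after Examples~\ref{ex:strong}). Each $\widetilde{V}_j$ is connected and the $\widetilde{U}_i$ are the connected components of $\mathcal{G}(\mathcal{A}')_0$, so $\Phi_0$ maps $\widetilde{V}_j$ into a single $\widetilde{U}_{i_j}$; set $\widetilde{V}'_j:=\Phi_0(\widetilde{V}_j)$, a connected submanifold of $\widetilde{U}_{i_j}$ with $\pi_{i_j}(\widetilde{V}'_j)=\pi_j(\widetilde{V}_j)$ (read off on objects from the commuting square). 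Restricting $\Phi$ to the subgroupoids over $\widetilde{V}_j$ and over $\widetilde{U}_{i_j}$ and identifying these with $(\Delta_j/K_j)\ltimes\widetilde{V}_j$ and $\Gamma_{i_j}\ltimes\widetilde{U}_{i_j}$, connectedness of $\widetilde{V}_j$ yields an injective homomorphism $\rho_j\co\Delta_j/K_j\hookrightarrow\Gamma_{i_j}$ with $\rho_j(\bar\delta)\circ\Phi_0=\Phi_0\circ\bar\delta$ on $\widetilde{V}_j$; in particular $\rho_j(\Delta_j/K_j)$ preserves $\widetilde{V}'_j$ and acts effectively on it. Essential injectivity then shows $\widetilde{V}'_j$ is a $\rho_j(\Delta_j/K_j)$-submanifold of $\widetilde{U}_{i_j}$: given $\gamma\in\Gamma_{i_j}$ and $q,\gamma q\in\widetilde{V}'_j$, the arrow of $\mathcal{G}(\mathcal{A}')$ that $\gamma$ defines at $q$ lifts to an arrow of $\mathcal{G}(\mathcal{B})$ between the corresponding points of $\widetilde{V}_j$, which by \cite[Lemma 2.11]{Moerdijk2003} equals $\bar\delta$ for some $\bar\delta\in\Delta_j/K_j$, so $\rho_j(\bar\delta)q=\gamma q$. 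Hence $\{\widetilde{V}'_j\}_{j\in J}$ is a suborbifold cover of $Y$ (the required openness and covering properties being inherited from $\{\widetilde{V}_j\}$). Finally I would replace each $\widetilde{U}_{i_j}$ by a separate diffeomorphic copy lying in $\mathcal{A}$ (so that distinct $j$ use distinct charts, keeping the indexing by a subset of the atlas) and, completing by the charts of $\mathcal{A}'$ to a countable atlas $\mathcal{A}''\subset\mathcal{A}$ covering $X$, conjugate $\Phi$ by the canonical arrows attached to these copies; this produces a groupoid embedding $\Phi'\co\mathcal{G}(\mathcal{B}')\to\mathcal{G}(\mathcal{A}'')$ with inclusion as object map, $\Phi'_1$ being a smooth embedding because conjugation by a smoothly varying family of arrows is a diffeomorphism onto its image.

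For the ``if'' direction, let $\{\widetilde{V}'_j\}_{j\in J}$, $\mathcal{B}'$, $\Phi'$ be as in the statement. By the shrinking of \cite[Remark 3.2]{Weilandt2017} --- around each point of $Y$ pick $j$, a preimage $\widetilde{x}\in\widetilde{V}'_j$, and a small connected $(\Gamma_j)_{\widetilde{x}}$-invariant $\widetilde{W}\subset\widetilde{U}_j$ with $\gamma\widetilde{W}\cap\widetilde{W}\neq\emptyset\Rightarrow\gamma\in(\Gamma_j)_{\widetilde{x}}$ and $\widetilde{W}\cap\widetilde{V}'_j$ a single slice closed in $\widetilde{W}$, then extract a countable subcover using second countability of $Y$ --- one obtains a countable atlas $\mathcal{A}''$ with $\mathcal{A}'\subset\mathcal{A}''\subset\mathcal{A}$ and a closed suborbifold cover $\{\widetilde{V}_k\}$ with respect to $\mathcal{A}''$, each $\widetilde{V}_k$ an open subset of some $\widetilde{V}'_{j(k)}$ contained in a chart domain $\widetilde{W}_k\subset\widetilde{U}_{j(k)}$ of $\mathcal{A}''$ and a $\bigl((\Gamma_{j(k)})_{\widetilde{x}_k}\cap\Delta'_{j(k)}\bigr)$-submanifold of $\widetilde{W}_k$. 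With $\mathcal{B}$ the induced atlas, an arrow of $\mathcal{G}(\mathcal{B})$ over $\widetilde{V}_k,\widetilde{V}_l$ is, since $\widetilde{V}_k$ is open in $\widetilde{V}'_{j(k)}$, also an arrow of $\mathcal{G}(\mathcal{B}')$ over $\widetilde{V}'_{j(k)},\widetilde{V}'_{j(l)}$; applying $\Phi'_1$ and then restricting the resulting arrow of $\mathcal{G}(\mathcal{A}')$ from $\widetilde{U}_{j(k)},\widetilde{U}_{j(l)}$ to the sub-chart domains $\widetilde{W}_k,\widetilde{W}_l$ of $\mathcal{A}''$ (equivalently, conjugating by the canonical arrows $[\widetilde{W}_k\hookrightarrow\widetilde{U}_{j(k)}]$) defines a groupoid embedding $\Phi\co\mathcal{G}(\mathcal{B})\to\mathcal{G}(\mathcal{A}'')$ with $\Phi_0$ the inclusion. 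By the elementary remark the square for $(\mathcal{B},\mathcal{A}'',\Phi)$ commutes, so $Y$ is a strong suborbifold.

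The main obstacle I anticipate is the bookkeeping that upgrades these conceptual maps to honest Lie groupoid embeddings with the prescribed object map: in the ``only if'' direction, checking carefully that $\{\widetilde{V}'_j\}$ is a suborbifold cover (this is exactly where essential injectivity enters) and that the passage to disjoint copies of charts followed by conjugation by their canonical arrows really yields a smooth embedding; in the ``if'' direction, arranging the shrinking of \cite[Remark 3.2]{Weilandt2017} so that each shrunk piece sits as an open subset of one of the old $\widetilde{V}'_j$ inside a new small chart (this is what makes the restriction-of-germs map available) and checking that the resulting assignment on arrows is well defined --- it is, because every arrow of $\mathcal{G}(\mathcal{B})$ records the chart domains of its source and target --- smooth and multiplicative. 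A minor point is that ``the orbifold atlas induced by'' a not-necessarily-closed suborbifold cover must be understood as in \cite[Proposition 3.3]{Weilandt2017}, whose construction of charts does not use closedness; closedness is needed only to land in Definition~\ref{def:strong-suborbifold}.
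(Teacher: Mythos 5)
Your proposal is correct and follows essentially the same route as the paper's (very terse) proof: for the ``only if'' direction you push the cover forward along $\Phi_0$, setting $\widetilde{V}'_j=\Phi_0(\widetilde{V}_j)$ and transporting the arrow map, and for the ``if'' direction you use that an inclusion object map makes the square in Definition~\ref{def:strongcover} commute automatically. The additional verifications you supply (that $\{\Phi_0(\widetilde{V}_j)\}$ is indeed a suborbifold cover via essential injectivity, the chart-duplication bookkeeping, and the shrinking to a closed cover in the converse) are details the paper leaves implicit rather than a different method.
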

\begin{proof}
  Given $\{\widetilde{V}_j\}_{j\in J}$ as in Definition \ref{def:strong-suborbifold} and an embedding $\Phi:\mathcal{G}(\mathcal{B})\to\mathcal{G}(\mathcal{A}^\prime)$ such that $\varepsilon_\mathcal{A^\prime}\circ |\Phi| = \varepsilon_\mathcal{B}$, it suffices to define $\widetilde{V}_j^\prime = \Phi_0(\widetilde{V}_j)$ and $\Phi^\prime_1([f]_p)=\Phi_1([f\circ\Phi_0]_{\Phi_0^{-1}(p)})$ for $[f]_p\in\mathcal{G}(\mathcal{B}^\prime)_1$. The other implication is clear.
\end{proof}

\begin{example}
\label{ex:nonstrong}
 An example of a suborbifold which is not strong is given by the (non-embeded) suborbifold from \cite[Example 12]{Borzellino2015}: If $M=\mathbb{C}^2$ and $G\simeq\mathbb{Z}_4$ is the group generated by $\begin{pmatrix}i&0\\0&-1\end{pmatrix}$, then $X=M/G$ is a good orbifold (equipped with the maximal atlas $\mathcal{A}$ containing the canonical global chart) and $Y=(\{0\}\times \mathbb{C})/G$ is a suborbifold. Assume that $Y$ is a strong suborbifold. Then there is a a countable atlas $\mathcal{A}^\prime=\{(U_i,\widetilde{U}_i/\Gamma_i,\pi_i)\}_{i\in I}\subset\mathcal{A}$ and a closed suborbifold cover $\{\widetilde{V}_j\}_{j\in J}$ with respect to $\mathcal{A}^\prime$ as in Proposition \ref{prop:strong-suborbifold}. Consider $j_0$ such that $[(0,0)]\in\pi_{j_0}(\widetilde{V}_{j_0})$ and let $\Delta_{j_0}$ be a subgroup of $\Gamma_{j_0}$ as in Definition \ref{def:suborbi-cover}. By the existence of $\Phi^\prime$ as in Proposition \ref{prop:strong-suborbifold} (and \cite[Lemma 2.11]{Moerdijk2003}), we can assume that $\Delta_{j_0}$ acts effectively on $\widetilde{V}_{j_0}$. Using that the isotropy of $[(0,0)]$ in $(Y,\mathcal{A})$ is $\mathbb{Z}_2$, we can now conclude that $\big\langle\begin{pmatrix}-1&0\\0&1\end{pmatrix}\big\rangle$ has to act effectively on an open neighborhood of $(0,0)$ in $\{0\}\times\mathbb{C}$ (compare the end of \cite[Example 5.4]{Weilandt2017}) -- a contradiction.
\end{example}

\begin{remark}
Examples \ref{ex:strong} (\ref{ex:strong:1}) and \ref{ex:nonstrong} suggest a close relation between strong and embedded suborbifolds. However, it is not clear to us if the two notions are equivalent in general.
\end{remark}

We shall now consider another type of suborbifold cover and the corresponding suborbifolds. In Theorem \ref{thm:very-strong} we will verify that the definition below indeed gives a special kind of strong suborbifold cover. 

\begin{definition}
\label{def:verystrong}
  Let $\mathcal{A}=\{(U_i,\widetilde{U}_i/\Gamma_i,\pi_i)\}_{i\in I}$ be a countable $n$-dimensional orbifold atlas on a second countable Hausdorff space $X$ and let $Y$ be a subset of $X$. A closed $k$-dimensional suborbifold cover $\{\widetilde{V}_j\}_{j\in J}$ on $Y$ with respect to $\mathcal{A}$ is called \emph{very strong} if it satisfies the following conditions:
  \begin{enumerate}  
  \item \label{def:verystrongU} 
  If $j\in J$ and $\gamma\in\Gamma_j$ fixes some open  subset of $\widetilde{V}_j$, then $\gamma=e$.
  \item \label{def:verystrongE} 
  For any $j_1,j_2\in J$ and $x\in\pi_{j_1}(\widetilde{V}_{j_1})\cap\pi_{j_2}(\widetilde{V}_{j_2})$, there is some orbifold chart $(W,\widetilde{W}/K,\sigma)$ on $X$, a subgroup $\Lambda\subset K$, a $k$-dimensional $\Lambda$-submanifold $\widetilde{Z}$ of $\widetilde{W}$ such that $\sigma(\widetilde{Z})$ is an open neighborhood of $x$ in $Y$ containing $x$ and there are injections $\lambda_m\co\widetilde{W}\to\widetilde{U}_{j_m}$, $m=1,2$, such that $\lambda_m(\widetilde{Z})\subset\widetilde{V}_{j_m}$ for every $m$.
  \end{enumerate}
\end{definition}

\begin{remark}
  \begin{enumerate}
  \label{rem:strong}
  \item Note that condition \eqref{def:verystrongU} above directly implies that every very strong cover is embedded.
  \item \label{rem:strong-cr} Readers familiar with \cite{Chen2002} may note that item \eqref{def:verystrongE} above is equivalent to the condition that the inclusions $\widetilde{V}_j\hookrightarrow\widetilde{U}_j$, $j\in J$, induce a ``$C^\infty$-lifting'' of the inclusion $Y\hookrightarrow X$ in the sense of \cite{Chen2002}.
  \end{enumerate}
\end{remark}

\begin{definition}
  Let $(X,\mathcal{A})$ be an orbifold. A subset $Y$ of $X$ is a \emph{very strong suborbifold} if there is a very strong suborbifold cover $\{\widetilde{V}_j\}_{j\in J}$ on $Y$ with respect to some countable atlas contained in $\mathcal{A}$.
\end{definition}

\begin{example}
\label{ex:very-strong}
  \begin{enumerate}
  \item Let $(X,\mathcal{A})$ be an $n$-dimensional orbifold and let $Y$ be an open subset of $X$. Then $Y$ is an $n$-dimensional very strong suborbifold of $(X,\mathcal{A})$: Given $x\in Y$, pick some chart $\{(U,\widetilde{U}/\Gamma,\pi)\}\in\mathcal{A}$ such that $x\in U$. Diminishing $U$ if necessary, we can assume that $U\subset Y$. Since $X$ is second countable, we can use this construction to obtain an atlas $\{(U_j,\widetilde{U}_j/\Gamma_j,\pi_j)\}_{j\in J}$ of $Y$ contained in a countable atlas $\mathcal{A}^\prime\subset\mathcal{A}$. In this special setting the suborbifold cover $\{\widetilde{U}_j\}_{j\in J}$ satisfies \eqref{def:verystrongE} simply due to the compatibility of  the corresponding charts in $\mathcal{A}^\prime$. Since every $\Gamma_j$ is finite and acts effectively on $\widetilde{U}_j$, the suborbifold cover satisfies \eqref{def:verystrongU} as well.
  \item \label{ex:very-strong:2} Note that condition \eqref{def:verystrongU} in Definition \ref{def:verystrong} directly implies that a singular point in an orbifold is not a very strong suborbifold (but a strong suborbifold by Example \ref{ex:discrete}).
  \end{enumerate}
\end{example}

\begin{theorem}
\label{thm:very-strong}
Every very strong suborbifold (cover) is strong.
\end{theorem}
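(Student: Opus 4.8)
The plan is to work with the countable atlas $\mathcal{A}=\{(U_i,\widetilde{U}_i/\Gamma_i,\pi_i)\}_{i\in I}$ with respect to which the given very strong cover $\{\widetilde{V}_j\}_{j\in J}$ is defined and to produce a Lie groupoid embedding $\Phi\co\mathcal{G}(\mathcal{B})\to\mathcal{G}(\mathcal{A})$ whose object map $\Phi_0$ is the inclusion $\coprod_{j\in J}\widetilde{V}_j\hookrightarrow\coprod_{i\in I}\widetilde{U}_i$, where $\mathcal{B}$ is the induced atlas. Note first that condition \eqref{def:verystrongU} forces each $\Delta_j$ to act effectively on $\widetilde{V}_j$ (an element fixing all of $\widetilde{V}_j$ in particular fixes an open subset), so $\mathcal{B}=\{(V_j,\widetilde{V}_j/\Delta_j,{\pi_j}_{|\widetilde{V}_j})\}_{j\in J}$. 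Once $\Phi$ is constructed with $\Phi_0$ the inclusion, $\varepsilon_{\mathcal{A}}\circ|\Phi|=\varepsilon_{\mathcal{B}}$ is automatic, so $\{\widetilde{V}_j\}_{j\in J}$ is a strong cover in the sense of Definition \ref{def:strongcover} and $Y$ is a strong suborbifold (one could also invoke Proposition \ref{prop:strong-suborbifold}). To define $\Phi_1$ I would introduce the notion of a \emph{germ-extension}: $[g]_p\in\mathcal{G}(\mathcal{A})_1$ is a germ-extension of $[f]_p\in\mathcal{G}(\mathcal{B})_1$ if $g$ coincides with $f$ on a neighbourhood of $p$ in $\coprod_{j\in J}\widetilde{V}_j$. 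The strategy is then: condition \eqref{def:verystrongE} yields \emph{existence} of a germ-extension of any $[f]_p$, condition \eqref{def:verystrongU} yields \emph{uniqueness}, and $\Phi_1([f]_p)$ is defined to be that unique germ-extension.

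For existence, take $[f]_p$ with $p\in\widetilde{V}_{j_1}$ and $q:=f(p)\in\widetilde{V}_{j_2}$, put $x:=\pi_{j_1}(p)=\pi_{j_2}(q)$, and apply \eqref{def:verystrongE} to get an orbifold chart $(W,\widetilde{W}/K,\sigma)$, a submanifold $\widetilde{Z}\subset\widetilde{W}$ and injections $\lambda_m\co\widetilde{W}\to\widetilde{U}_{j_m}$, $m=1,2$, with $\lambda_m(\widetilde{Z})\subset\widetilde{V}_{j_m}$. Pick $\widetilde{w}_0\in\widetilde{Z}$ with $\sigma(\widetilde{w}_0)=x$; since the fibres of ${\pi_{j_m}}_{|\widetilde{V}_{j_m}}$ are $\Delta_{j_m}$-orbits, composing $\lambda_1,\lambda_2$ with suitable elements of $\Delta_{j_1},\Delta_{j_2}$ arranges $\lambda_1(\widetilde{w}_0)=p$ and $\lambda_2(\widetilde{w}_0)=q$. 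Then $f\circ{\lambda_1}_{|\widetilde{Z}}$ and ${\lambda_2}_{|\widetilde{Z}}$ are smooth embeddings of a connected neighbourhood of $\widetilde{w}_0$ in $\widetilde{Z}$ into $\widetilde{V}_{j_2}$, agreeing at $\widetilde{w}_0$ and intertwining $\sigma$ with ${\pi_{j_2}}_{|\widetilde{V}_{j_2}}$; by the standard rigidity of injections into the connected chart $\widetilde{V}_{j_2}$ with its effective $\Delta_{j_2}$-action (via \cite[Lemma 2.11]{Moerdijk2003} and connectedness) they differ near $\widetilde{w}_0$ by an element of $\Delta_{j_2}$, and composing $\lambda_2$ with it once more arranges $f\circ{\lambda_1}_{|\widetilde{Z}}={\lambda_2}_{|\widetilde{Z}}$ near $\widetilde{w}_0$. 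Now $g:=\lambda_2\circ\lambda_1^{-1}\in P(\mathcal{A})$, $g(p)=q$, and, as ${\lambda_1}_{|\widetilde{Z}}$ is an open embedding onto a neighbourhood of $p$ in $\widetilde{V}_{j_1}$, $g$ agrees with $f$ near $p$; so $[g]_p$ is a germ-extension of $[f]_p$.

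The step I expect to be the main obstacle is \textbf{uniqueness} of the germ-extension, which is exactly where condition \eqref{def:verystrongU} is indispensable. If $[g]_p$ and $[g']_p$ are germ-extensions of the same $[f]_p$, then $g,g'$ are transitions of $\widetilde{U}_{j_2}$ near $q$, so by \cite[Lemma 2.11]{Moerdijk2003} we have $g'=\gamma\circ g$ near $p$ for some $\gamma\in\Gamma_{j_2}$ fixing $q$; but $g$ and $g'$ both equal $f$ on a neighbourhood of $p$ in $\widetilde{V}_{j_1}$, whose $f$-image is an open neighbourhood of $q$ in $\widetilde{V}_{j_2}$, so $\gamma$ fixes an open subset of $\widetilde{V}_{j_2}$ and hence $\gamma=e$ by \eqref{def:verystrongU}. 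Thus $[g]_p=[g']_p$ and $\Phi_1$ is well defined. (Applied to $[\id]_p$, the same argument shows the germ-extension of a unit is a unit.)

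It then remains to verify that $\Phi=(\Phi_0,\Phi_1)$ is a Lie groupoid embedding, which I expect to be routine given the above. Compatibility with $s,t$ is immediate from $g(p)=q$; preservation of units was just noted; and $\Phi_1$ respects multiplication and inversion because a composite (resp.\ inverse) of germ-extensions is a germ-extension of the corresponding composite (resp.\ inverse), hence equals its image by uniqueness. For smoothness and the embedding property, note that a single transition $g$ built as above is simultaneously a germ-extension of $[f]_{p'}$ for all $p'$ near $p$ in $\widetilde{V}_{j_1}$, so in the canonical charts $p'\mapsto[f]_{p'}$ of $\mathcal{G}(\mathcal{B})_1$ and $p'\mapsto[g]_{p'}$ of $\mathcal{G}(\mathcal{A})_1$ the map $\Phi_1$ is simply the inclusion of an open piece of the submanifold $\widetilde{V}_{j_1}$ into $\widetilde{U}_{j_1}$; since the cover is closed this is a (closed) smooth embedding, and $\Phi_1$ is globally injective because a germ-extension $[g]_p$ determines $f$ near $p$. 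Hence $\Phi$ is the required Lie groupoid embedding, $\{\widetilde{V}_j\}_{j\in J}$ is a strong cover, and $Y$ is a strong suborbifold.
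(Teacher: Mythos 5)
Your proof is correct and follows essentially the same route as the paper: $\Phi_0$ is the inclusion, condition \eqref{def:verystrongE} of Definition \ref{def:verystrong} is used to extend each germ $[f]_p$ to a germ $[\lambda_2\circ\lambda_1^{-1}]_p\in\mathcal{G}(\mathcal{A})_1$ after adjusting the injections by elements of the $\Delta_{j_m}$, condition \eqref{def:verystrongU} gives uniqueness of that extension (hence well-definedness and injectivity of $\Phi_1$), and smoothness and the embedding property follow from the local description of $\Phi_1$ as an inclusion. Your explicit check that uniqueness of germ-extensions forces compatibility with multiplication, units and inverses is a point the paper leaves implicit, but the argument is the same.
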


\begin{proof}
Let $\mathcal{A}=\{(U_i,\widetilde{U}_i/\Gamma_i,\pi_i)\}_{i\in I}$ be a countable orbifold atlas on a second countable Hausdorff space $X$. If $\{\widetilde{V}_j\}_{j\in J}$ is a very strong suborbifold cover on a subset $Y\subset X$ with respect to $\mathcal{A}$ and $\mathcal{B}=\{(V_j,\widetilde{V}_j/\Delta_j,{\pi_j}_{|\widetilde{V}_j})\}_{j\in J}$ denotes the induced orbifold atlas on $Y$, then we have to show that there is a Lie groupoid embedding $\Phi\co\mathcal{G}(\mathcal{B})\to\mathcal{G}(\mathcal{A})$ such that $\varepsilon_{\mathcal{A}}\circ |\Phi|=\varepsilon_{\mathcal{B}}$.

  Let $H=\mathcal{G(B)}$ and $G=\mathcal{G(A)}$. First note that $H_0=\bigcup_{j\in J}\widetilde{V}_j\subset\bigcup_{i\in I}\widetilde{U}_i=G_0$. Now let $[f]_p\in H_1$ and let $j_1, j_2\in J$ such that $p\in \widetilde{V}_{j_1}$ and $f(p)\in\widetilde{V}_{j_2}$. Without loss of generality, assume $j_1=1$ and $j_2=2$. Diminishing the domain of $f$, we can assume that it is contained in $\widetilde{V}_1$. Since $\pi_2(f(p))=\pi_1(p)$, by condition \eqref{def:verystrongE} from Definition \ref{def:verystrong} there is an orbifold chart $(W,\widetilde{W}/K,\sigma)$ on $X$ a subgroup $\Lambda\subset K$ and a $\Lambda$-submanifold $\widetilde{Z}\subset\widetilde{W}$ such that $\pi_1(p)\in\sigma(\widetilde{Z})$ together with injections $\lambda_k\co\widetilde{W}\to\widetilde{U}_k$, $k=1,2$, such that each $\lambda_k(\widetilde{Z})$ is an open subset of $\widetilde{V}_k$. Diminishing $\widetilde{Z}$, $\widetilde{W}$ if necessary, we can assume that $\widetilde{Z}$ is closed in $\widetilde{W}$ and connected (see \cite[Lemma 2.6]{Weilandt2017}) and hence we obtain a chart $\sigma_{|\widetilde{Z}}$ on $Y$. Let $\gamma\in\Gamma_1$ such that $\gamma p\in\lambda_1(\widetilde{Z})$. Since $\widetilde{V}_1$ is a $\Delta_1$-submanifold, there is $\delta\in \Delta_1$ such that $\delta p=\gamma p$. Thus replacing $\lambda_1$ by $\delta^{-1}\circ\lambda_1$ if necessary, we can assume that $p\in\lambda_1(\widetilde{Z})$. Analogously, we can assume that $f(p)\in\lambda_2(\widetilde{Z})$. Since $f\circ{\lambda_1}_{|\widetilde{Z}}$ and ${\lambda_2}_{|\widetilde{Z}}$ are injections $\widetilde{Z}\to\widetilde{V}_2$ from $\sigma_{|\widetilde{Z}}$ to ${\pi_2}_{|\widetilde{V}_2}$, by \cite[Proposition A.1]{Moerdijk1997} we can, if necessary, replace $\lambda_2$ by a composition with an appropriate element of $\Delta_2$ to guarantee $f\circ{\lambda_1}_{|\widetilde{Z}}={\lambda_2}_{|\widetilde{Z}}$. Set $\Phi_1([f]_p)=[\lambda_2\circ\lambda_1^{-1}]_p\in G_1$. Condition \eqref{def:verystrongU} from Definition \ref{def:verystrong} implies that this extension of $[f]_p$ to an element of $G_1$ is unique. (Two such extensions would differ by some $\gamma\in\Gamma_2$ fixing the image of $f$. Since this image is open in $\widetilde{V}_2$, we conclude $\gamma=e$.) Therefore $\Phi_1$ is well-defined and injective. Since $\Phi_1$ locally corresponds to inclusions of the form $\dom f\hookrightarrow\lambda_1(\widetilde{W})$, it is an embedding.
  
  Together with the inclusion $\Phi_0\co H_0\hookrightarrow G_0$, we obtain a groupoid embedding $\Phi\co H\to G$. Since $\Phi_0$ is just the inclusion, we easily see that $\varepsilon_\mathcal{A}\circ|\Phi|=\varepsilon_\mathcal{B}$.
\end{proof}

\begin{remark}
  The proof above shows that if $\mathcal{A}$ is a countable atlas on a second countable Hausdorff space $X$, $\{\widetilde{V}_j\}_{j\in J}$ is a very strong suborbifold cover on $Y\subset X$ with respect to $\mathcal{A}$ and $\mathcal{B}$ is the induced orbifold atlas on $Y$, then there is a unique Lie groupoid embedding $\Phi\co\mathcal{G}(\mathcal{B})\to\mathcal{G}(\mathcal{A})$ such that $\Phi_0$ is the inclusion.
\end{remark}

\begin{definition}
\label{def:regular-cover}
  Let $\mathcal{A}=\{(U_i,\widetilde{U}_i/\Gamma_i,\pi_i)\}_{i\in I}$ be an $n$-dimensional orbifold atlas on a second countable Hausdorff space $X$. A \emph{regular suborbifold cover} on a subset $Y\subset X$ is a closed suborbifold cover $\{\widetilde{V}_j\}_{j\in J}$ on $Y$ such that every
 $\{p\in\widetilde{V}_j;~{\Gamma_j}_p=\{e\}\}$ is dense in $\widetilde{V}_j$ and connected.
\end{definition}

\begin{remark}
  Note that the set $\{p\in\widetilde{V}_j;~{\Gamma_j}_p=\{e\}\}$ considered above is automatically open, as it is the intersection of the set of points with trivial isotropy in $\widetilde{U}_j$ with $\widetilde{V}_j$.
\end{remark}

\begin{definition}
\label{def:regular-suborbifold}
  Let $(X,\mathcal{A})$ be an orbifold. A subset $Y$ of $X$ is a \emph{regular suborbifold} if there is a regular suborbifold cover $\{\widetilde{V}_j\}_{j\in J}$ on $Y$ with respect to some countable atlas contained in $\mathcal{A}$.
\end{definition}

\begin{remark}
  Our definitions above are inspired by the definition of a ``regular'' map in \cite[Definition 4.4.10]{Chen2002}: Remark \ref{rem:strong} \eqref{rem:strong-cr} and Proposition \ref{prop:regular} below imply that given a regular cover $\{\widetilde{V}_j\}_{j\in J}$ in the sense of Definition \ref{def:regular-cover}, the inclusions $\widetilde{V}_j\hookrightarrow\widetilde{U}_j$, $j\in J$, define a ``regular $C^\infty$'' lifting (in the sense of \cite{Chen2002}) of the inclusion. However, the converse does not hold: If $\rho$ denotes the rotation of $\mathbb{R}^2$ around the origin by $\pi/2$, the inclusion of $\mathbb{R}\times\{0\}$ into $\mathbb{R}^2$ gives a ``regular $C^\infty$'' lifting (\cite{Chen2002}) between the orbifolds $(\mathbb{R}\times\{0\})/\langle\rho^2\rangle$ and $\mathbb{R}^2/\langle\rho\rangle$ but the suborbifold is not regular in the sense of Definition \ref{def:regular-suborbifold}.
\end{remark}

Before verifying that every regular suborbifold is very strong, we will consider certain graphs as a basic example. In order to talk about graphs we will need the following rather simple notion of a smooth map, which is sufficient to guarantee well-behaved graphs and to guarantee that embedded suborbifolds are images of embeddings (\cite{Weilandt2017}). (We just consider certain continuous maps between the underlying spaces in the spirit of \cite{Satake1956}, other sources including \cite{Satake1957,Chen2002} consider equivalence relations on collections of smooth local lifts or even more complex structures (\cite{Pohl2017}).)

\begin{definition}
Given two orbifolds $(X,\mathcal{A})$, $(X^\prime,\mathcal{A}^\prime)$, a \emph{smooth map} $f\co (X,\mathcal{A})\to (X^\prime,\mathcal{A}^\prime)$ is a continuous map $f\co X\to X^\prime$ such that for each $x\in X$ there is a chart $(U,\widetilde{U}/\Gamma,\pi)\in\mathcal{A}$ around $x$, a chart $(U^\prime,\widetilde{U}^\prime/\Gamma^\prime,\pi^\prime)\in\mathcal{A}^\prime$ around $f(x)$, a smooth map $\widetilde{f}\co \widetilde{U}\to\widetilde{U}^\prime$ and a homomorphism $\overline{f}\co \Gamma\to\Gamma^\prime$ such that $\pi^\prime\circ \widetilde{f}=f\circ\pi$ and $\widetilde{f}(\gamma p)=\overline{f}(\gamma)\widetilde{f}(p)$ for every $\gamma\in\Gamma$, $p\in\widetilde{U}$.
\end{definition}

Given any map $f$, we will denote its graph by $\gr f$. Recall from \cite[Proposition 3.7]{Weilandt2017} that the graph of a smooth map between orbifolds is an embedded suborbifold and note that the given proof shows that $\gr f$ is fully embedded if all points in the image of $f$ are regular. Modifying the latter condition gives a criterion for a graph to be a regular suborbifold:

\begin{proposition}
  Let $(X,\mathcal{A})$ and $(X^\prime,\mathcal{A}^\prime)$ be orbifolds such that the singular strata of $(X,\mathcal{A})$ have codimension at least two. If $f\co (X,\mathcal{A})\to (X^\prime,\mathcal{A}^\prime)$ is a smooth map such that $f((X,\mathcal{A})^\reg)\subset(X^\prime,\mathcal{A}^\prime)^\reg$, then $\gr f$ is a regular suborbifold of $(X\times X^\prime,\mathcal{A}\times\mathcal{A}^\prime)$.
\end{proposition}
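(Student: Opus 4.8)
The plan is to build a regular suborbifold cover of $\gr f$ from the data of a $C^\infty$-lifting of $f$, chart by chart. First I would fix $(x,f(x))\in\gr f$ and pick, as in the definition of a smooth map, charts $(U,\widetilde U/\Gamma,\pi)\in\mathcal{A}$ around $x$ and $(U',\widetilde U'/\Gamma',\pi')\in\mathcal{A}'$ around $f(x)$ together with a smooth lift $\widetilde f\co\widetilde U\to\widetilde U'$ and a homomorphism $\overline f\co\Gamma\to\Gamma'$ compatible with the actions. The natural candidate for the chart of $\gr f$ is the product chart $(U\times U',(\widetilde U\times\widetilde U')/(\Gamma\times\Gamma'),\pi\times\pi')\in\mathcal{A}\times\mathcal{A}'$, and inside $\widetilde U\times\widetilde U'$ I would take $\widetilde V:=\gr\widetilde f=\{(\widetilde p,\widetilde f(\widetilde p));~\widetilde p\in\widetilde U\}$, which is a connected submanifold diffeomorphic to $\widetilde U$ (hence of dimension $n=\dim X$). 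The relevant subgroup is $\Delta:=\{(\gamma,\overline f(\gamma));~\gamma\in\Gamma\}\subset\Gamma\times\Gamma'$; the compatibility $\widetilde f(\gamma\widetilde p)=\overline f(\gamma)\widetilde f(\widetilde p)$ says precisely that $\widetilde V$ is $\Delta$-invariant and that $(\widetilde p,\widetilde f(\widetilde p))\mapsto\widetilde p$ intertwines the $\Delta$-action on $\widetilde V$ with the $\Gamma$-action on $\widetilde U$. Shrinking these charts (using second countability of $X\times X'$) I would extract a countable subfamily whose $\pi\times\pi'$-images cover $\gr f$, and by \cite[Remark 3.2]{Weilandt2017} I may pass to a closed such cover; this is essentially the content of \cite[Proposition 3.7]{Weilandt2017}, which I would cite for the fact that $\{\widetilde V_j\}$ is an (embedded) suborbifold cover and for the $\Delta_j$-submanifold property.

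It then remains to check the two conditions in Definition~\ref{def:regular-cover}. For connectedness of the regular locus of $\widetilde V_j$: under the diffeomorphism $\widetilde V_j\cong\widetilde U_j$ the set $\{q\in\widetilde V_j;~{(\Gamma_j\times\Gamma_j')}_q=\{e\}\}$ corresponds to $\{\widetilde p\in\widetilde U_j;~\Gamma_{j,\widetilde p}=\{e\}\text{ and }\Gamma'_{j,\widetilde f(\widetilde p)}=\{e\}\}$, because the stabilizer of $(\widetilde p,\widetilde f(\widetilde p))$ in $\Gamma_j\times\Gamma_j'$ is $(\Gamma_j)_{\widetilde p}\times(\Gamma_j')_{\widetilde f(\widetilde p)}$. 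The first condition says $\widetilde p$ lies over a regular point of $X$; by the hypothesis $f((X,\mathcal{A})^{\reg})\subset(X',\mathcal{A}')^{\reg}$, any such $\widetilde p$ automatically satisfies the second condition as well. So the regular locus of $\widetilde V_j$ is the preimage under $\widetilde V_j\cong\widetilde U_j$ of the regular locus of $\widetilde U_j$; that regular locus is the complement in $\widetilde U_j$ of the $\Gamma_j$-singular set, which has codimension at least two in $\widetilde U_j$ by the codimension hypothesis on the singular strata of $X$, hence its complement is connected (and dense) since $\widetilde U_j$ is a connected manifold and removing a closed subset of codimension $\ge 2$ from a connected manifold leaves it connected. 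Density in $\widetilde V_j$ is then immediate (it is the generic orbifold fact that regular points are dense, transported through the diffeomorphism).

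The main obstacle, and the only place where real care is needed, is the connectedness claim: one must be sure that the set of points of $\widetilde U_j$ with \emph{nontrivial} isotropy is contained in the union of the singular strata and that these strata genuinely have codimension $\ge 2$ \emph{inside the chart $\widetilde U_j$} (not merely in $X$), so that the standard "removing a codimension-$\ge 2$ set preserves connectedness of a connected manifold" argument applies; this is routine but should be spelled out. A secondary, purely bookkeeping point is to confirm that the countable subcover can be arranged with $\widetilde V_j$ closed in $\widetilde U_j$ and with $\pi_j\times\pi_j'(\widetilde V_j)$ open in $\gr f$ — but this is exactly what \cite[Proposition 3.7]{Weilandt2017} together with \cite[Remark 3.2]{Weilandt2017} already provides, so I would simply invoke it rather than redo the shrinking argument. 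Assembling these observations shows $\{\widetilde V_j\}_{j\in J}$ is a regular suborbifold cover of $\gr f$ with respect to a countable atlas inside $\mathcal{A}\times\mathcal{A}'$, which is what we wanted.
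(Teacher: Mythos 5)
Your proposal is correct and follows essentially the same route as the paper: the cover by graphs $\widetilde V_j=\gr\widetilde f_j$ of local lifts inside product charts, with $\Delta_j=\gr\overline f_j$, and regularity deduced from the fact that the codimension hypothesis makes $\widetilde U_j^{\text{reg}}$ connected and dense while $f((X,\mathcal{A})^{\text{reg}})\subset(X^\prime,\mathcal{A}^\prime)^{\text{reg}}$ identifies the regular locus of $\widetilde V_j$ with the graph over $\widetilde U_j^{\text{reg}}$. The only cosmetic differences are that the paper verifies the suborbifold-cover properties directly rather than citing \cite{Weilandt2017}, and completes the charts on $X^\prime$ to a full countable atlas so that the product charts genuinely form an atlas of $X\times X^\prime$ — a bookkeeping point you should also include.
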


\begin{proof}
  Let $\{(U_i,\widetilde{U}_i/\Gamma_i,\pi_i)\}_{i\in I}\subset\mathcal{A}$ be a countable atlas on $X$ and consider charts $\{(U_i ^\prime,\widetilde{U}^\prime_i/\Gamma^\prime_i,\pi^\prime_i)\}_{i\in I}\subset\mathcal{A}^\prime$ on $X^\prime$ such that for every $i\in I$ there is a smooth map $\widetilde{f}_i\co\widetilde{U}_i\to\widetilde{U}_i^\prime$ and a homomorphism $\overline{f}_i\co\Gamma_i\to\Gamma_i^\prime$ such that $\pi_i^\prime\circ\widetilde{f}_i=f_i\circ\pi_i$ and $\widetilde{f}_i(\gamma p)=\overline{f}_i(\gamma)\widetilde{f}_i(p)$. Completing $\{(U_i ^\prime,\widetilde{U}^\prime_i/\Gamma^\prime_i,\pi^\prime_i)\}$ to a countable atlas $\{(U_j^\prime,\widetilde{U}^\prime_j/\Gamma^\prime_j,\pi^\prime_j)\}_{j\in J}\subset\mathcal{A}^\prime$, we obtain a countable atlas \[\mathcal{C}=\{(U_i\times U_j^\prime,(\widetilde{U}_i\times\widetilde{U}_j^\prime)/(\Gamma_i\times\Gamma_j^\prime),\pi_i\times\pi_j^\prime)\}_{(i,j)\in I\times J}\subset\mathcal{A}\times\mathcal{A}^\prime.\]
  
  For each $i\in I$ set $\widetilde{V}_i=\gr\widetilde{f}_i\subset\widetilde{U}_i\times\widetilde{U}_i^\prime$ and $\Delta_i=\gr\overline{f}_i\subset \Gamma_i\times\Gamma_i^\prime$ and note that $\widetilde{V}_i$ is a closed $\Delta_i$-submanifold such that $(\pi_i\times\pi_i^\prime)(\widetilde{V}_i)=\gr f\cap (U_i\times U_i^\prime)$ is open in $\gr f$. Since the union of the latter sets cover $\gr f$, the family $\{\widetilde{V}_i\}_{i\in I}$ is a closed suborbifold cover of $\gr f$ with respect to $\mathcal{C}$.
  
  To see that this suborbifold cover is regular, we have to show that for each $i\in I$ the set $S=\{(p,\widetilde{f}_i(p));~p\in\widetilde{U}_i,~(\Gamma_i\times\Gamma_i^\prime)_{(p,\widetilde{f}_i(p))}=\{e\}\}$ is dense in $\widetilde{V}_i$ and connected. Note that the regular part $\widetilde{U}_i^\reg=\{(p\in\widetilde{U}_i;~{\Gamma_i}_p=\{e\}\}$ is dense in $\widetilde{U}_i$ and connected due to the codimension condition. Hence the graph of the restriction of ${\widetilde{f}_i}$ to $\widetilde{U}_i^\reg$ is dense in $\widetilde{V}_i$ and connected. Moreover, it is contained in $S$, since $f((X,\mathcal{A}))^\reg\subset(X^\prime,\mathcal{A}^\prime)^\reg$. We conclude that $S$ is also dense in $\widetilde{V}_i$ and connected.
\end{proof}

\begin{example}
\begin{enumerate}
\item Applying the proposition above to the identity on an orbifold $(X,\mathcal{A})$ whose singular stratum has codimension at least two, we observe that the diagonal $D=\{(x,x);~x\in X\}$ is a very strong suborbifold of $(X\times X,\mathcal{A}\times\mathcal{A})$. (Note that the diagonal is often taken as a litmus test if a suborbifold definition is sufficiently general (\cite{Borzellino2008}, also compare \cite{Cho2013} for the groupoid setting). For instance, a simple isotropy argument shows that the diagonal is not fully embedded if the orbifold contains a singular point.)

\item We should note that a graph of an arbitrary smooth map need not be a very strong suborbifold. Consider, for instance, a map $\{\ast\}\to (X,\mathcal{A})$ whose domain is a single point and whose image is a singular point (compare Example \ref{ex:very-strong} \eqref{ex:very-strong:2}).
\end{enumerate}
\end{example}

To verify that every regular cover is very strong, we will need the following lemma.

\begin{lemma}
\label{lemma:transf}
  Let $M$ be a manifold, let $\Gamma$ be a finite group acting smoothly and effectively on $M$ and let $\pi\co M\to M/\Gamma$ denote the canonical projection. Let $N_1$ be a submanifold of $M$ and let $N_2$ be a closed $\Delta$-submanifold of $M$ with respect to some subgroup $\Delta$ of $\Gamma$. If $\pi(N_1)\subset\pi(N_2)$ and $N_1^\prime:=\{p\in N_1;~\Gamma_p=\{e\}\}$ is connected and dense in $N_1$, then there is $\gamma\in \Gamma$ such that $\gamma N_1\subset N_2$.
\end{lemma}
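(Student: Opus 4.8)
The plan is to exploit that $\Gamma$ is finite, so that the translates $\gamma N_2$, $\gamma\in\Gamma$, decompose the dense connected set $N_1'$ into finitely many relatively closed, pairwise disjoint pieces; connectedness will then force $N_1'$ into a single translate, and density will upgrade this to all of $N_1$. First I would reformulate the hypothesis $\pi(N_1)\subset\pi(N_2)$: for each $p\in N_1$ there is $q\in N_2$ with $\pi(q)=\pi(p)$, and since the fibres of $\pi$ are the $\Gamma$-orbits this means $q=\gamma p$ for some $\gamma\in\Gamma$, i.e.\ $p\in\gamma^{-1}N_2$. Hence $N_1\subset\bigcup_{\gamma\in\Gamma}\gamma N_2$, and in particular $N_1'=\bigcup_{\gamma\in\Gamma}(N_1'\cap\gamma N_2)$.

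The crucial step is to show that for a point $p\in N_1'$ the translate containing it is well defined. Suppose $p\in\gamma_1 N_2\cap\gamma_2 N_2$ and set $q:=\gamma_1^{-1}p\in N_2$; then $\gamma_2^{-1}\gamma_1 q=\gamma_2^{-1}p\in N_2$, so the $\Delta$-submanifold property of $N_2$ yields $\delta\in\Delta$ with $\delta q=\gamma_2^{-1}\gamma_1 q$, that is $(\gamma_2\delta\gamma_1^{-1})p=p$. Because $p\in N_1'$ has trivial isotropy, $\gamma_2\delta\gamma_1^{-1}=e$, so $\gamma_2=\gamma_1\delta^{-1}$, and since $N_2$ is $\Delta$-invariant this gives $\gamma_1 N_2=\gamma_2 N_2$. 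Consequently the distinct sets among $\{\gamma N_2\cap N_1':\gamma\in\Gamma\}$ are pairwise disjoint (and finite in number).

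Now each $\gamma N_2$ is closed in $M$ (as $N_2$ is closed and $\gamma$ is a homeomorphism of $M$), so each $\gamma N_2\cap N_1'$ is closed in $N_1'$. Thus $N_1'$ is a finite disjoint union of relatively closed sets, one of which is nonempty; connectedness of $N_1'$ forces $N_1'=\gamma_0 N_2\cap N_1'$ for a single $\gamma_0\in\Gamma$, i.e.\ $N_1'\subset\gamma_0 N_2$. Since $N_1'$ is dense in $N_1$ and $\gamma_0 N_2$ is closed in $M$, we get $N_1\subset\overline{N_1'}\subset\gamma_0 N_2$, hence $\gamma_0^{-1}N_1\subset N_2$, and $\gamma:=\gamma_0^{-1}$ is the required element. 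I expect the uniqueness/disjointness step to be the only real point of substance: one must feed the $\Delta$-submanifold property and the triviality of $\Gamma_p$ into the argument in the correct order; the remaining steps are a routine compactness-free connectedness argument plus a density closure.
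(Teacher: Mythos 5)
Your proof is correct, and it takes a genuinely different route from the paper's. You run a global clopen-decomposition argument: $\pi(N_1)\subset\pi(N_2)$ gives $N_1'\subset\bigcup_{\gamma\in\Gamma}\gamma N_2$; the $\Delta$-submanifold property combined with triviality of $\Gamma_p$ for $p\in N_1'$ shows that two translates $\gamma_1N_2,\gamma_2N_2$ meeting at a point of $N_1'$ must coincide (since $\gamma_2=\gamma_1\delta^{-1}$ with $\delta\in\Delta$ and $N_2$ is $\Delta$-invariant); closedness of $N_2$ makes the pieces $\gamma N_2\cap N_1'$ relatively closed, so connectedness forces $N_1'$ into a single translate, and density plus closedness finishes. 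The paper instead argues locally-to-globally: it first produces, for each $p\in N_1'$, a neighborhood $U_p$ and an element $\gamma_p$ with $\gamma_pU_p\subset N_2$ via a sequence/compactness-of-$\Gamma$ argument, and then propagates a single $\gamma$ along chains of overlapping neighborhoods covering a path in $N_1'$ (invoking that a connected submanifold is path-connected). The key algebraic step -- feeding the $\Delta$-submanifold property and the trivial isotropy into each other -- is the same in both proofs; what your version buys is brevity and slightly greater generality: you use only that $N_1'$ is a connected topological space, avoiding both the sequential argument and path-connectedness, and the role of each hypothesis (closedness of $N_2$, triviality of isotropy on $N_1'$, connectedness, density) is cleanly isolated.
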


\begin{proof}
  First note that $N_1^\prime$ is open in $N_1$.
  
  Given $p\in N_1^\prime$, we shall show that there is an open neighborhood $U_p\in N_1^\prime$ of $p$ and $\gamma_p\in \Gamma$ such that $\gamma_pU_p\subset N_2$: Since $\pi(N_1)\subset\pi(N_2)$, there is $\gamma_p$ such that $\gamma_pp\in N_2$. If there was no neighborhood $U_p$ as desired, there would be a sequence $(p_n)\subset N_1$ converging to $p$ such that $\gamma_pp_n\notin N_2$ for every $n\in\mathbb{N}$. Let $(\gamma_n)\subset \Gamma$ be a sequence such that $\gamma_np_n\in N_2$. Since $\Gamma$ is finite, we can assume that $(\gamma_n)$ is constant, say $\gamma_0$. Since $N_2$ is closed, we obtain $\gamma_0p=\gamma_0\lim p_n=\lim(\gamma_0p_n)\in N_2$ and $\gamma_pp\in N_2$ implies that there is $\delta\in \Delta$ such that $\delta \gamma_0p=\gamma_pp$. Since $p\in N_1^\prime$, we obtain $\gamma_p^{-1}\delta\gamma_0\in\{e\}$ and conclude that $\gamma_pp_n=\delta\gamma_0p_n\in N_2$ -- a contradiction.
  
Fix $p_0\in N_1^\prime$ and write $\gamma=\gamma_{p_0}$. To see that $\gamma N_1^\prime\subset N_2$, let $q_0\in N_1^\prime$. For each $p\in N_1^\prime$ let $\gamma_p\in \Gamma$ and $U_p\subset N_1$ be as in the preceding paragraph. Being a connected submanifold of $N_1$, $N_1^\prime$ is path-connected. Let $c\co [0,1]\to N_1^\prime$ be a smooth curve such that $c(0)=p_0,c(1)=q_0$. Let $p_0,\ldots,p_k=q_0$ be points in the image $\im(c)$ of $c$ such that $\im(c)\subset \bigcup_{i=0}^kU_{p_i}$ and $U_{p_i}\cap U_{p_{i+1}}\neq\emptyset$ for each $i$. Given $r_i\in U_{p_i}\cap U_{p_{i+1}}$, we have $\gamma_{p_i} r_i,\gamma_{p_{i+1}}r_i\in N_2$. Since $N_2$ is a $\Delta$-submanifold and $\Gamma_{r_i}=\{e\}$, we obtain $\gamma_{p_i}\gamma_{p_{i+1}}^{-1}\in \Delta$. Since this holds for every $i$, we can write
\[\gamma U_{q_0}=(\gamma_{p_0}\gamma_{p_1}^{-1})(\gamma_{p_1}\gamma_{p_2}^{-1})\cdots (\gamma_{p_{k-1}}\gamma_{p_k}^{-1})\gamma_{p_k}U_{p_k}\subset N_2.\]
In particular, $\gamma q_0\in N_2$.
  
  Since $N_1^\prime$ is dense in $N_1$ and $N_2$ is closed, we conclude that $\gamma N_1\subset N_2$.
\end{proof}

\begin{remark}
  Note that, in the setting of the lemma above, it is not sufficient to demand $\pi(N_1^\prime)$ (instead of $N_1^\prime$ itself) to be connected and dense in $\pi(N_1)$: Consider $M=\mathbb{R}^2$. Let $f\co \mathbb{R}\to\mathbb{R}$ be a strictly increasing odd function such that $f^{(k)}(0)=0$ for all $k\ge 0$, let $N_1$ be its graph and let $N_2$ be the graph of $|f|$. Let $\rho$ denote the rotation of $\mathbb{R}^2$ by $\pi/2$ and let $s$ denote the reflection along the $y$-axis. Let $D_4$ denote the dihedral group generated by $\rho$ and $s$ and set $\Gamma=D_4$, $\Delta=\langle s\rangle\simeq\mathbb{Z}_2$. Then $N_1, N_2$ satisfy almost all conditions from Lemma \ref{lemma:transf}, the only exception being that $N_1^\prime=N_1\setminus\{(0,0)\}$ is disconnected (but $\pi(N_1^\prime)$ is connected). It is easy to see that there is no $\gamma$ as in the conclusion of the lemma.
\end{remark}

\begin{proposition}
\label{prop:regular}
  Let $\mathcal{A}=\{(U_i,\widetilde{U}_i/\Gamma_i,\pi_i)\}_{i\in I}$ be a countable $n$-dimensional orbifold atlas on a second countable Hausdorff space $X$. If $\{\widetilde{V}_j\}_{j\in J}$ is a regular suborbifold cover on a subset $Y\subset X$, then it is very strong.
\end{proposition}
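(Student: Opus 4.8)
The plan is to verify the two conditions of Definition~\ref{def:verystrong} for a regular suborbifold cover $\{\widetilde V_j\}_{j\in J}$ with respect to $\mathcal A=\{(U_i,\widetilde U_i/\Gamma_i,\pi_i)\}_{i\in I}$. Condition \eqref{def:verystrongU} is immediate from the density clause of Definition~\ref{def:regular-cover}: if $j\in J$ and $\gamma\in\Gamma_j$ fixes a nonempty open subset $O\subset\widetilde V_j$, then, since $\{p\in\widetilde V_j : (\Gamma_j)_p=\{e\}\}$ is dense in $\widetilde V_j$, there is $p\in O$ with $(\Gamma_j)_p=\{e\}$; from $\gamma p=p$ we get $\gamma\in(\Gamma_j)_p=\{e\}$. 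The substance of the proof is therefore condition \eqref{def:verystrongE}, and the engine will be Lemma~\ref{lemma:transf}.

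So fix $j_1,j_2\in J$ and $x\in\pi_{j_1}(\widetilde V_{j_1})\cap\pi_{j_2}(\widetilde V_{j_2})$. First I would invoke compatibility of the ambient charts $(U_{j_1},\widetilde U_{j_1}/\Gamma_{j_1},\pi_{j_1})$ and $(U_{j_2},\widetilde U_{j_2}/\Gamma_{j_2},\pi_{j_2})$ at $x$ to get an orbifold chart $(W,\widetilde W/K,\sigma)$ with $x\in W\subset U_{j_1}\cap U_{j_2}$ and injections $\lambda_m\co\widetilde W\to\widetilde U_{j_m}$ from $\sigma$ to $\pi_{j_m}$; being injections between charts of equal dimension, the $\lambda_m$ are open embeddings, and by the standard theory of injections each carries an induced injective homomorphism $\overline{\lambda}_m\co K\hookrightarrow\Gamma_{j_m}$ with $\lambda_m\circ k=\overline{\lambda}_m(k)\circ\lambda_m$. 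After shrinking $W$ around $x$ (replacing $\widetilde W$ by a suitable connected component of the preimage) I would arrange that $W\cap Y\subset\pi_{j_1}(\widetilde V_{j_1})\cap\pi_{j_2}(\widetilde V_{j_2})$ and, fixing a lift $\widetilde x\in\sigma^{-1}(x)$, that each $\lambda_m(\widetilde W)$ is a slice neighbourhood of $\lambda_m(\widetilde x)$ for $(\Gamma_{j_m})_{\lambda_m(\widetilde x)}$; composing each $\lambda_m$ with a suitable element of $\Gamma_{j_m}$ (which preserves the preceding properties), and using $x\in\pi_{j_m}(\widetilde V_{j_m})$, we may in addition assume $\lambda_m(\widetilde x)\in\widetilde V_{j_m}$. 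Then I would set $\widetilde Z:=\lambda_1^{-1}(\widetilde V_{j_1})=\lambda_1^{-1}\bigl(\widetilde V_{j_1}\cap\lambda_1(\widetilde W)\bigr)$: since $\lambda_1$ is an open embedding and $\widetilde V_{j_1}$ is closed in $\widetilde U_{j_1}$, this is a closed $k$-dimensional submanifold of $\widetilde W$ containing $\widetilde x$, with $\sigma(\widetilde Z)=\pi_{j_1}(\widetilde V_{j_1}\cap\lambda_1(\widetilde W))$ an open neighbourhood of $x$ in $Y$. Using that $\widetilde V_{j_1}$ is a $\Delta_{j_1}$-submanifold of $\widetilde U_{j_1}$ together with the density of its regular locus and the slice property, one checks that $\widetilde Z$ is a $\Lambda$-submanifold of $\widetilde W$ for $\Lambda:=\{k\in K : \overline{\lambda}_1(k)\in\Delta_{j_1}\}$.

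It remains to arrange $\lambda_2(\widetilde Z)\subset\widetilde V_{j_2}$. Writing $\phi:=\lambda_2\circ\lambda_1^{-1}$, a diffeomorphism between open subsets of $\widetilde U_{j_1}$ and $\widetilde U_{j_2}$ with $\pi_{j_2}\circ\phi=\pi_{j_1}$, we have $\lambda_2(\widetilde Z)=\phi(\widetilde V_{j_1}\cap\lambda_1(\widetilde W))=:N_1$, a $k$-dimensional submanifold of $\widetilde U_{j_2}$. I would apply Lemma~\ref{lemma:transf} with $M=\widetilde U_{j_2}$, $\Gamma=\Gamma_{j_2}$, $\pi=\pi_{j_2}$, $N_2=\widetilde V_{j_2}$ and $\Delta=\Delta_{j_2}$. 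The chain $\pi_{j_2}(N_1)=\pi_{j_1}(\widetilde V_{j_1}\cap\lambda_1(\widetilde W))\subset W\cap Y\subset\pi_{j_2}(\widetilde V_{j_2})$ gives $\pi(N_1)\subset\pi(N_2)$; and since the isotropy group of a point of $\widetilde U_{j_2}$ agrees up to isomorphism with that of its image in $X$ (hence with the corresponding point of $\widetilde U_{j_1}$ via $\phi^{-1}$), the set $\{p\in N_1 : (\Gamma_{j_2})_p=\{e\}\}$ equals $\phi\bigl(\{p\in\widetilde V_{j_1}\cap\lambda_1(\widetilde W) : (\Gamma_{j_1})_p=\{e\}\}\bigr)$, which is dense in $N_1$. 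Granting that this set is moreover connected, Lemma~\ref{lemma:transf} yields $\gamma\in\Gamma_{j_2}$ with $\gamma N_1\subset\widetilde V_{j_2}$; replacing $\lambda_2$ by $\gamma\circ\lambda_2$ (still an injection) then gives $\lambda_2(\widetilde Z)\subset\widetilde V_{j_2}$, so that $(W,\widetilde W/K,\sigma)$, $\Lambda$, $\widetilde Z$, $\lambda_1$, $\lambda_2$ witness \eqref{def:verystrongE}.

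The hard part is exactly the connectedness of $\{p\in\widetilde V_{j_1}\cap\lambda_1(\widetilde W) : (\Gamma_{j_1})_p=\{e\}\}$ needed to invoke Lemma~\ref{lemma:transf}. By hypothesis $\{p\in\widetilde V_{j_1} : (\Gamma_{j_1})_p=\{e\}\}$ is connected, but intersecting a connected open set with a small neighbourhood can destroy connectedness (this is precisely the phenomenon illustrated in the remark following Lemma~\ref{lemma:transf}), so the refining chart $(W,\widetilde W/K,\sigma)$ must not be chosen too small; reconciling this with the shrinking used above to secure the slice and $\Lambda$-submanifold properties is the delicate point of the argument, and it is the one place where "connected" in Definition~\ref{def:regular-cover}, as opposed to merely "dense", is used.
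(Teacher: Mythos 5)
Your proposal follows the same route as the paper's proof: condition \eqref{def:verystrongU} from density of the regular locus, and condition \eqref{def:verystrongE} via chart compatibility together with Lemma \ref{lemma:transf}, applied with $N_1=\lambda_2(\widetilde Z)$ and $N_2=\widetilde V_{j_2}$ so as to adjust $\lambda_2$ by an element of $\Gamma_{j_2}$. The verification of the density hypothesis and the bookkeeping with $\Lambda$ are essentially as in the paper.

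The gap is the one you flag yourself: the connectedness hypothesis of Lemma \ref{lemma:transf} is never established, and your choice $\widetilde Z=\lambda_1^{-1}(\widetilde V_{j_1})$ makes matters worse, since the full preimage --- hence $N_1$, hence a fortiori $N_1^\prime$ --- can already be disconnected, in which case the lemma genuinely fails (different connected components of $N_1^\prime$ may require different elements $\gamma$, exactly as in the remark following the lemma). The paper instead takes $\widetilde Z$ to be the connected component of $\lambda_1^{-1}(\widetilde V_{j_1})$ containing a chosen lift $\widetilde x$ of $x$ (which is also why its $\Lambda$ carries the extra condition $k\widetilde Z\subset\widetilde Z$), and then asserts that regularity of the cover implies that $\lambda_2(\widetilde Z)^\prime$ is connected and dense in $\lambda_2(\widetilde Z)$; with that, Lemma \ref{lemma:transf} applies and the argument closes. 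So the concrete missing steps are: pass to the connected component through $\widetilde x$ rather than the whole preimage, and justify that the regular locus of ($\lambda_2$ applied to) that component is connected. Your observation that shrinking $W$ could in principle disconnect the regular locus is legitimate --- the paper disposes of this point in a single sentence --- but a complete proof must either supply that justification or at least perform the reduction to the component; as written, ``granting that this set is moreover connected'' leaves the crux of the proposition unproven.
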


\begin{proof}
  To verify condition \eqref{def:verystrongU} from Definition \ref{def:verystrong}, let $j\in J$ and let $\gamma\in\Gamma_j$ such that $\gamma_{|S}=\text{id}_{S}$ for some open $S \subset \widetilde{V}_j$. Since our cover is regular, there is $p\in S$ such that ${\Gamma_j}_p=\{e\}$. Hence $\gamma=e$.
  
  To verify condition \eqref{def:verystrongE}, let $j_1,j_2\in J$ and $x\in\pi_{j_1}(\widetilde{V}_{j_1})\cap\pi_{j_2}(\widetilde{V}_{j_2})$. Assume $j_1=1$, $j_2=2$ and write $V_{m}:=\pi_m(\widetilde{V}_m)$ for $m=1,2$. By the compatibility of $\pi_1$ and $\pi_2$, there is an orbifold chart $(W,\widetilde{W}/K,\sigma)$ on $X$ around $x$ with injections $\lambda_m\co \widetilde{W}\to\widetilde{U}_m$. Diminishing $W$ if necessary, we can assume that $W\cap Y\subset V_1\cap V_2$. Let $\widetilde{x}\in \sigma^{-1}(x)$, let $\widetilde{Z}$ be the connected component of $\lambda_1^{-1}(\widetilde{V}_1)$ containing $\widetilde{x}$ and set
 $\Lambda:=\{k\in\overline\lambda_1^{-1}(\Delta_1);~k\widetilde{Z}\subset\widetilde{Z}\}$. Then $\widetilde{Z}$ is a connected $\Lambda$-submanifold of $\widetilde{W}$ and $\sigma(\widetilde{Z})$ is open in $\sigma(\lambda_1^{-1}(\widetilde{V}_1))=V_1\cap W$ and hence an open neighborhood of $x$ in $Y$. Regularity of the cover implies that $\lambda_2(\widetilde{Z})^\prime=\{p\in\lambda_2(\widetilde{Z});~{\Gamma_2}_p=\{e\}\}$ is connected and dense in $\lambda_2(\widetilde{Z})$. Since $\pi_2(\lambda_2(\widetilde{Z}))=\sigma(\widetilde{Z})\subset W\cap Y\subset V_2$ and $\widetilde{V}_2$ is closed in $\widetilde{U}_2$, by Lemma \ref{lemma:transf}, there is $\gamma\in\Gamma_2$ such that $\gamma\circ\lambda_2(\widetilde{Z})\subset\widetilde{V}_2$. Replacing $\lambda_2$ by $\gamma\circ\lambda_2$, we finally obtain injections $\lambda_1$, $\lambda_2$ as in condition \eqref{def:verystrongE} of Definition \ref{def:verystrong}.
\end{proof}

\begin{example}
  If $(X,\mathcal{A})$ is an orbifold and $Y$ is a fully embedded suborbifold whose singular stratum has codimension at least $2$ (in $(Y,\mathcal{A}_{|Y})$), then $Y$ is a regular suborbifold.
\end{example}

\subsection*{Acknowledgments} \ \

\noindent The work of J.N.M. is supported by the Max-Planck-Gesellschaft. Both authors would like to thank the anonymous referee for carefully reading the paper and suggesting various improvements, including the legitimate demand for an example of a suborbifold which is not strong.

\bibliographystyle{plain+url}
\bibliography{lit3}

\begin{thebibliography}{10}

\bibitem{Borzellino2008}
Joseph~E. {Borzellino} and Victor {Brunsden}.
\newblock {A manifold structure for the group of orbifold diffeomorphisms of a
  smooth orbifold.}
\newblock {\em {J. Lie Theory}}, 18(4):979--1007, 2008.
\newblock \href {http://arxiv.org/abs/math/0608526v3}
  {\path{arXiv:math/0608526v3}}.

\bibitem{Borzellino2012}
Joseph~E. {Borzellino} and Victor {Brunsden}.
\newblock {Elementary orbifold differential topology.}
\newblock {\em {Topology Appl.}}, 159(17):3583--3589, 2012.
\newblock \href {http://arxiv.org/abs/1205.1156v1} {\path{arXiv:1205.1156v1}},
  \href {http://dx.doi.org/10.1016/j.topol.2012.08.032}
  {\path{doi:10.1016/j.topol.2012.08.032}}.

\bibitem{Borzellino2015}
Joseph~E. {Borzellino} and Victor {Brunsden}.
\newblock {On the notions of suborbifold and orbifold embedding.}
\newblock {\em {Algebr. Geom. Topol.}}, 15(5):2789--2803, 2015.
\newblock \href {http://arxiv.org/abs/1406.3303v2} {\path{arXiv:1406.3303v2}},
  \href {http://dx.doi.org/10.2140/agt.2015.15.2789}
  {\path{doi:10.2140/agt.2015.15.2789}}.

\bibitem{Chen2006}
Weimin {Chen}.
\newblock {On a notion of maps between orbifolds. I: Function spaces.}
\newblock {\em {Commun. Contemp. Math.}}, 8(5):569--620, 2006.
\newblock \href {http://arxiv.org/abs/math/0603671v1}
  {\path{arXiv:math/0603671v1}}, \href
  {http://dx.doi.org/10.1142/S0219199706002246}
  {\path{doi:10.1142/S0219199706002246}}.

\bibitem{Chen2002}
Weimin {Chen} and Yongbin {Ruan}.
\newblock {Orbifold Gromov-Witten theory.}
\newblock In {\em {Orbifolds in mathematics and physics. Proceedings of a
  conference on mathematical aspects of orbifold string theory, Madison, WI,
  USA, May 4--8, 2001}}, pages 25--85. Providence, RI: American Mathematical
  Society (AMS), 2002.
\newblock \href {http://arxiv.org/abs/math/0103156v1}
  {\path{arXiv:math/0103156v1}}.

\bibitem{Cho2013}
Cheol-Hyun {Cho}, Hansol {Hong}, and Hyung-Seok {Shin}.
\newblock {On orbifold embeddings.}
\newblock {\em {J. Korean Math. Soc.}}, 50(6):1369--1400, 2013.
\newblock \href {http://arxiv.org/abs/1308.5764v1} {\path{arXiv:1308.5764v1}},
  \href {http://dx.doi.org/10.4134/JKMS.2013.50.6.1369}
  {\path{doi:10.4134/JKMS.2013.50.6.1369}}.

\bibitem{Lange2018}
C.~{Lange}.
\newblock {Orbifolds from a metric viewpoint}.
\newblock {\em ArXiv e-prints}, January 2018.
\newblock \href {http://arxiv.org/abs/1801.03472v1}
  {\path{arXiv:1801.03472v1}}.

\bibitem{Moerdijk1997}
I.~{Moerdijk} and D.A. {Pronk}.
\newblock {Orbifolds, sheaves and groupoids.}
\newblock {\em {$K$-Theory}}, 12(1):3--21, 1997.
\newblock \href {http://dx.doi.org/10.1023/A:1007767628271}
  {\path{doi:10.1023/A:1007767628271}}.

\bibitem{Moerdijk2002}
Ieke {Moerdijk}.
\newblock {Orbifolds as groupoids: an introduction.}
\newblock In {\em {Orbifolds in mathematics and physics. Proceedings of a
  conference on mathematical aspects of orbifold string theory, Madison, WI,
  USA, May 4--8, 2001}}, pages 205--222. Providence, RI: American Mathematical
  Society (AMS), 2002.

\bibitem{Moerdijk2003}
Ieke {Moerdijk} and J.~{Mr\v{c}un}.
\newblock {\em {Introduction to foliations and Lie groupoids.}}
\newblock Cambridge: Cambridge University Press, 2003.

\bibitem{Pohl2017}
Anke~D. {Pohl}.
\newblock {The category of reduced orbifolds in local charts.}
\newblock {\em {J. Math. Soc. Japan}}, 69(2):755--800, 2017.
\newblock \href {http://arxiv.org/abs/1001.0668v5} {\path{arXiv:1001.0668v5}},
  \href {http://dx.doi.org/10.2969/jmsj/06920755}
  {\path{doi:10.2969/jmsj/06920755}}.

\bibitem{Pronk2016}
D.~{Pronk}, L.~{Scull}, and M.~{Tommasini}.
\newblock {Atlases for Ineffective Orbifolds}.
\newblock {\em ArXiv e-prints}, June 2016.
\newblock \href {http://arxiv.org/abs/1606.04439v2}
  {\path{arXiv:1606.04439v2}}.

\bibitem{Satake1956}
Ichir\^o {Satake}.
\newblock {On a generalization of the notion of manifold.}
\newblock {\em {Proc. Natl. Acad. Sci. USA}}, 42:359--363, 1956.
\newblock \href {http://dx.doi.org/10.1073/pnas.42.6.359}
  {\path{doi:10.1073/pnas.42.6.359}}.

\bibitem{Satake1957}
Ichir\^o {Satake}.
\newblock {The Gauss-Bonnet theorem for $V$-manifolds.}
\newblock {\em {J. Math. Soc. Japan}}, 9:464--492, 1957.
\newblock \href {http://dx.doi.org/10.2969/jmsj/00940464}
  {\path{doi:10.2969/jmsj/00940464}}.

\bibitem{Weilandt2017}
Martin {Weilandt}.
\newblock {Suborbifolds, quotients and transversality.}
\newblock {\em {Topology Appl.}}, 222:293--306, 2017.
\newblock \href {http://arxiv.org/abs/1512.08937v4}
  {\path{arXiv:1512.08937v4}}, \href
  {http://dx.doi.org/10.1016/j.topol.2017.03.011}
  {\path{doi:10.1016/j.topol.2017.03.011}}.

\end{thebibliography}

\end{document}